\documentclass[11pt]{article} 
\usepackage{amssymb, latexsym, amsmath}
\pagestyle{plain}

\setlength{\textwidth}{15cm}
\setlength{\topmargin}{0cm}
\setlength{\headheight}{0cm}
\setlength{\headsep}{0cm}
\setlength{\topskip}{0cm}
\setlength{\textheight}{21cm}
\setlength{\oddsidemargin}{0.5cm}
\setlength{\evensidemargin}{0.5cm}

\newtheorem{defin}{}
\newtheorem{saetze}[defin]{}
\newtheorem{conjec}[defin]{}
\newtheorem{lemmas}[defin]{}
\newtheorem{folger}[defin]{}
\newtheorem{bemerk}[defin]{}

\newenvironment{theorem}  {\begin{saetze}\it {\bf Theorem:}}{\end{saetze}}

\newenvironment{lemma}    {\begin{lemmas}\it {\bf Lemma:}}{\end{lemmas}}
\newenvironment{corollary}{\begin{folger}\it {\bf Corollary:}}{\end{folger}}
\newenvironment{remark}   {\begin{bemerk}\rm {\it Remark:}}{\end{bemerk}}
\newenvironment{proof}    {\noindent{\it Proof}:}{{\hfill \fillbox \bigskip}}

\newcommand{\fillbox}{\mbox{$\bullet$}}
\newcommand{\ra}{\rightarrow}

\newcommand{\ms}{\mapsto}
\newcommand{\ol}{\overline}
\newcommand{\ti}{\tilde}
\newcommand{\N}{\mathbb N}

\newcommand{\Z}{\mathbb Z}
\newcommand{\Q}{\mathbb Q}
\newcommand{\T}{\mathcal{T}}

\newcommand{\Aut}{{\mathrm{Aut}}}
\newcommand{\GL}{{\mathrm{GL}}}

\newcommand{\g}{\mathfrak{g}}

\newenvironment{items}{\begin{list}{$\alph{item})$}
{\labelwidth18pt \leftmargin18pt \topsep3pt \itemsep1pt \parsep0pt}}
{\end{list}}

\begin{document}

\title{Torsion-free nilpotent groups of small Hirsch length \\
       with isomorphic finite quotients}
\author{Alexander Cant and Bettina Eick}
\date{May 5, 2023}

\maketitle

\begin{abstract}
Let $\T$ denote the class of finitely generated torsion-free nilpotent
groups. For a group $G$ let $F(G)$ be the set of isomorphism classes of
finite quotients of $G$. 
Pickel proved that if $G \in \T$, then the set
$\g(G)$ of isomorphism classes of groups $H \in \T$ with $F(G) = F(H)$ is
finite. We give an explicit description of the sets $\g(G)$ for the 
$\T$-groups $G$ of Hirsch length at most $5$. Based on this, we show that 
for each Hirsch length $n \geq 4$ and for each $m \in \N$ there is a 
$\T$-group $G$ of Hirsch length $n$ with $|\g(G)| \geq m$.
\end{abstract}

\section{Introduction}

Let $G$ be a finitely generated torsion-free nilpotent group, $\T$-group
for short, and write $F(G)$ for the set of isomorphism classes of finite
quotients of $G$. The {\em genus} $\g(G)$ of $G$ (in the class of all 
$\T$-groups) is the set of isomorphism classes of $\T$-groups $H$ with 
$F(G) = F(H)$. A purpose of this paper is to determine explicitly the 
genus $\g(G)$ for each $\T$-group $G$ of Hirsch length at most $5$.
This addresses the ``Basic Problems'' as introduced by Grunewald \&
Zalesskii \cite{GrZa11} for $\T$-groups.

Pickel \cite{Pic71, Pic71a} proved the interesting theorem that if $G$ is 
a $\T$-group, then its genus $\g(G)$ is finite. He obtains this result by 
showing that the elements in $\g(G)$ correspond to a subset of the double 
cosets in a subgroup of $\prod_{p \text{ prime}} \Aut(\Q_p G)$, where 
$\Q_p G$ is the rational $p$-adic completion of $G$. While this allows to 
prove the desired finiteness, it does not yield many insights into the 
nature of the groups in $\g(G)$.

Our results here yield an explicit description of the genera of the 
$\T$-groups of Hirsch length at most $5$. This provides an illustration
of Pickel's theorem \cite{Pic71, Pic71a} and gives some
insight into the nature of the groups in $\g(G)$ for a $\T$-group
$G$ of Hirsch length at most $5$. Our methods rely on the classification 
up to isomorphisms of the $\T$-groups of Hirsch length at most $5$ by 
Eick \& Engel \cite{EEn17}.

Grunewald \& Scharlau \cite{GSc79} proved that the genus of a $\T$-group 
of Hirsch length at most $5$ and class at most $2$ is trivial; that is, 
it consists of a single isomorphism class only. Thus the genus of a 
$\T$-group of Hirsch length at most $3$ is trivial.
Our results here imply the following, see Corollary \ref{coro} and Section 
\ref{nobound} below.

\begin{theorem}
\label{unbounded}
For each $n \geq 4$ and each $m \in \N$ there exists a $\T$-group $G$
of Hirsch length $n$ whose genus $\g(G)$ has more than $m$ elements.
\end{theorem}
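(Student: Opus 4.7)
\medskip
\noindent\textbf{Proof plan.}
The plan is to establish the base case $n=4$ from the explicit classification of genera of $\T$-groups of Hirsch length at most $5$ provided earlier in the paper (Corollary \ref{coro}), and then bootstrap to arbitrary $n \geq 4$ by a direct-product construction. For $n=4$: by the Grunewald--Scharlau result, any nontrivial genus forces nilpotency class exactly $3$, which is a very restricted family at this Hirsch length. Parametrising such groups via the Eick--Engel classification underlying Corollary \ref{coro}, I would single out an explicit sequence $\{G_k\}_{k \in \N}$ for which the genus sizes, as read off from the explicit description, grow without bound. Any $k$ with $|\g(G_k)| > m$ then yields the required example.

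For $n > 4$, set $G := G_k \times \Z^{n-4}$, a $\T$-group of Hirsch length $n$. To conclude $|\g(G)| \geq |\g(G_k)|$ I would verify that the assignment $H \ms H \times \Z^{n-4}$ sends pairwise non-isomorphic genus mates of $G_k$ to pairwise non-isomorphic genus mates of $G$. That it lands in $\g(G)$ follows from the general observation that every finite quotient of a direct product of finitely generated groups is a fibered product of finite quotients of the individual factors over a common finite quotient; since $F(H)$ determines the iso classes of finite quotients of $H$ together with their quotient lattice (in particular, the common abelian quotients with $\Z^{n-4}$), the resulting collection of fibered products depends only on $F(H)$, and hence $F(H) = F(G_k)$ implies $F(H \times \Z^{n-4}) = F(G)$.

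The main obstacle is the injectivity of $H \ms H \times \Z^{n-4}$ on isomorphism classes: cancellation by a free abelian direct factor is not automatic for arbitrary groups, but within the class of finitely generated nilpotent groups it can be established by recovering the $\Z^{n-4}$ summand intrinsically from $G$, for instance as a canonical complement to the isolator of $[G,G]$ inside the centre $Z(G)$ of $G$. An alternative is to bypass cancellation altogether by distinguishing the genus members of $G_k$ via invariants that persist under multiplication with $\Z^{n-4}$, such as the lower central quotients or the appropriate Mal'cev completion data appearing in Pickel's double-coset description. Either route, combined with the base case, completes the argument.
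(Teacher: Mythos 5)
Your overall architecture matches the paper's: an explicit type-$(2,1,1)$ family in Hirsch length $4$ with unbounded genus (the paper takes $t_{123}=t_{134}=p$ and $t_{124}=i$ for $1\le i\le\lfloor p/2\rfloor$, via Theorem \ref{class211}(b)), followed by crossing with $\Z^{n-4}$. However, your treatment of the crucial injectivity step --- that $H_i\times\Z^{n-4}\not\cong H_j\times\Z^{n-4}$ for non-isomorphic genus mates $H_i,H_j$ --- has a genuine gap. Cancellation of free abelian direct factors really can fail for finitely generated (even torsion-free nilpotent) groups, so this cannot be waved through. Your first proposed fix, recovering the $\Z^{n-4}$ summand as ``a canonical complement'' inside $Z(G)$, does not work: here $Z(H_i\times\Z^{n-4})\cong\Z^{n-3}$ and the relevant cyclic subgroup $Z(H_i)$ has many complements, none canonical; moreover an isomorphism $H_i\times A\to H_j\times A$ need not respect the product decomposition at all, which is precisely why cancellation is delicate. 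Your second proposed fix (invariants persisting under $-\times\Z^{n-4}$) is the cancellation problem restated, since genus mates agree on essentially all standard invariants. The paper resolves this by citing Hirshon's cancellation theorem \cite{Hir77}, which applies because type-$(2,1,1)$ groups have infinite cyclic centre; some such hypothesis and a genuine theorem are needed here.

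There is also a smaller gap in your argument that $F(H)=F(G_k)$ implies $F(H\times\Z^{n-4})=F(G_k\times\Z^{n-4})$. You argue via fibered products of finite quotients, claiming the collection of such fibered products ``depends only on $F(H)$''; but $F(H)$ is by definition only the \emph{set of isomorphism classes} of finite quotients, and it does not record which quotients admit compatible common quotients, so the collection of fibered products is not obviously determined by it. To make this rigorous you would need the equivalence of $F(G)=F(H)$ with an isomorphism of completions --- either profinite completions, or, as the paper does, the $p$-adic completions: $\Z_p(H\times A)=\Z_p H\times\Z_p A$ combined with Theorem \ref{main}(a) gives the claim in one line. As written, both of these steps need repair before the proof is complete.
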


Grunewald \& Scharlau \cite{GSc79} proved a that for each $m \in \N$ there
exists a $\T$-group $G$ of Hirsch length $6$ whose genus $\g(G)$ has more 
than $m$ elements. Theorem \ref{unbounded} extends this result
to all possible Hirsch lengths. We also note that the groups used in the 
proof of Theorem \ref{unbounded} have class $3$, while the groups in 
\cite{GSc79} have class $2$. 

Based on work by Pickel \cite{Pic73}, Grunewald, Pickel \& Segal 
\cite{GrPiSe80} proved that the genus in the class of polycyclic-by-finite 
groups is finite. In contrast, the work by Nekrashevych \cite{Nek14} as 
well as by Nikolov \& Segal \cite{NSe21} shows that there are classes of 
groups that allow uncountably large genera.

\section{Results}\label{results}

Let $G$ be a $\T$-group of Hirsch length $n$ and class $c$. We denote with 
$G = \gamma_1(G) > \ldots > \gamma_c(G) > \gamma_{c+1}(G) = \{1\}$ the lower
central series of $G$ and for $1 \leq i \leq c+1$ we write 
$I_i(G)/\gamma_i(G)$ for the torsion subgroup of $G/\gamma_i(G)$. Then 
\[ G = I_1(G) > I_2(G) > \ldots > I_c(G) > I_{c+1}(G) = \{1\} \]
is the {\em isolator series} of $G$. It is a central series with torsion-free
abelian quotients. The sequence $(d_1, \ldots, d_c)$ of ranks of 
these quotients is the {\em type} of $G$.

We refine the isolator series of $G$ to a central series
$G = G_1 > G_2 > \ldots > G_n > G_{n+1} = \{1\}$ whose quotients are infinite cyclic.
For all $i$, let $g_i G_{i+1}$ be a generator of $G_i/G_{i+1}$.
Then $(g_1, \ldots, g_n)$ is known as a {\em basis} of $G$.
Note that every group element has a unique representation of the form
$g_1^{x_1} \cdots g_n^{x_n}$ with integer exponents $x_1,\ldots,x_n$.
Throughout the article, let $\Z(n)$ denote the set of all sequences
$(t_{i,j,k} \in \Z \mid 1 \leq i < j < k \leq n)$.
Each basis of $G$ induces a finite presentation for $G$ of the form
\[ G(t) = \langle \, g_1, \ldots, g_n \mid 
  [g_j, g_i] = g_{j+1}^{t_{i,j,j+1}} \cdots g_n^{t_{i,j,n}}
  \mbox{ for } 1 \leq i < j \leq n \, \rangle \]
for some $t \in \Z(n)$.
As $n \leq 5$ in all our applications, we write $t_{ijk}$ for $t_{i,j,k}$.

\subsection{Type $(2,1,1)$}\label{sec:Type211}

For $t \in \Z(4)$ define $d(t) = \gcd(t_{123}, t_{134})$. Further, we write 
\[ T(2,1,1) = \{ \, t \in \Z(4) \mid 0 < t_{123}, t_{134}; \
    0 \le t_{124} \leq \tfrac{d(t)}{2}; \ t_{234} = 0 \, \}.\]
Part (a) of the following theorem follows from the work of Eick \& Engel 
\cite{EEn17} and part (b) is our first main result here.

\begin{theorem}\label{class211}
\begin{items}
\item[\rm (a)]
For each $\T$-group $G$ of type $(2,1,1)$ there exists a unique $t \in 
T(2,1,1)$ with $G \cong G(t)$.
\item[\rm (b)]
Let $s, t \in T(2,1,1)$. Then $F(G(s)) = F(G(t))$ if and only if
\begin{items}
\item[$\bullet$] 
$s_{123} = t_{123}$ and $s_{134} = t_{134}$, and 
\item[$\bullet$]
for each maximal prime power $p^k$ dividing $d(s)$ there exists 
$w_p \in \Z$ with $p \nmid w_p$ and $p^k \mid (t_{124} w_p - s_{124})$.
\end{items}
\end{items}
\end{theorem}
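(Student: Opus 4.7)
The plan is to reduce, via Pickel's framework, to a prime-local question: for $\T$-groups $G$ and $H$, the equality $F(G) = F(H)$ holds if and only if, for every prime $p$, the Mal'cev $\Z_p$-completions $G \otimes \Z_p$ and $H \otimes \Z_p$ are isomorphic. The completion $G(t)_p$ has the same presentation as $G(t)$ but with $\Z_p$-valued exponents, so the task reduces to classifying, at each prime $p$, the $\Z_p$-isomorphisms $\phi : G(s)_p \to G(t)_p$. Since $\phi$ preserves the isolator series, it has the triangular form
\[ \phi(g_1) = g_1^{\alpha_1} g_2^{\beta_1} g_3^{\gamma_1} g_4^{\delta_1}, \ \
   \phi(g_2) = g_1^{\alpha_2} g_2^{\beta_2} g_3^{\gamma_2} g_4^{\delta_2}, \ \
   \phi(g_3) = g_3^{c} g_4^{e}, \ \
   \phi(g_4) = g_4^{d}, \]
with $\alpha_1 \beta_2 - \alpha_2 \beta_1, c, d \in \Z_p^\times$.

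First I would evaluate $\phi$ on the three nontrivial commutator relations of $G(s)_p$. From $\phi([g_3, g_2]) = 1$, using that $[g_3, g_2] = 1$ in $G(t)_p$ and that $g_4$ is central, one extracts $g_4^{t_{134} c \alpha_2} = 1$, forcing $\alpha_2 = 0$. Matching the $g_3$-coefficients in $\phi([g_2, g_1]) = \phi(g_3)^{s_{123}} \phi(g_4)^{s_{124}}$ gives $\alpha_1 \beta_2 t_{123} = c s_{123}$, and the analogous computation for $\phi([g_3, g_1]) = \phi(g_4)^{s_{134}}$ yields $\alpha_1 c t_{134} = d s_{134}$. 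Since $\alpha_1, \beta_2, c, d$ are $p$-adic units, these equations at every prime force $v_p(s_{123}) = v_p(t_{123})$ and $v_p(s_{134}) = v_p(t_{134})$, hence by positivity in the normal form $s_{123} = t_{123}$, $s_{134} = t_{134}$, $c = \alpha_1 \beta_2$, and $d = \alpha_1^2 \beta_2$.

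The remaining information lives in the $g_4$-coefficient of $[\phi(g_2), \phi(g_1)]$, whose computation uses the class-three identity $[g_2^m, g_1^n] = [g_2, g_1]^{mn} g_4^{m \binom{n}{2} t_{123} t_{134}}$ (arising from $\gamma_3(G(t)) = \langle g_4^{t_{123} t_{134}} \rangle$). After substitution, the $g_4$-equation rearranges to
\[ \alpha_1 \beta_2 (\alpha_1 s_{124} - t_{124}) = \beta_2 \binom{\alpha_1}{2} t_{123} t_{134} + \alpha_1 \gamma_2 t_{134} - e t_{123}. \]
As $\gamma_2, e$ range over $\Z_p$, the right-hand side sweeps out the ideal $(t_{123}, t_{134}) \Z_p = d(t) \Z_p$, and the correction $\binom{\alpha_1}{2} t_{123} t_{134}$ already lies there; so the equation is solvable for some unit $\alpha_1 \in \Z_p^\times$ precisely when $\alpha_1 s_{124} \equiv t_{124} \pmod{p^{v_p(d(t))}}$ is solvable with $\alpha_1$ a unit. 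Taking $w_p$ to be an integer lift of $\alpha_1^{-1}$ then recovers the theorem's condition at $p$, giving necessity. For sufficiency I would, given admissible $w_p$, set $\alpha_2 = \beta_1 = 0$, $\beta_2 = 1$, $\alpha_1 = w_p^{-1}$, solve for $\gamma_2, e \in \Z_p$ by Bézout, and verify that the resulting $\phi$ respects all three defining commutator relations.

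The main obstacle will be the class-three commutator bookkeeping: one must check carefully that the cross-term $\binom{\alpha_1}{2} t_{123} t_{134}$ really sits in $d(t) \Z_p$ (so the $g_4$-equation collapses to a single congruence modulo $d(t)$ rather than something finer) and that the pair $(\gamma_2, e)$ genuinely realizes the full ideal $d(t) \Z_p$ on the right-hand side.
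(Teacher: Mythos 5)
Your proposal is correct and follows essentially the same route as the paper: reduce $F(G(s))=F(G(t))$ to isomorphism of $\Z_p$-completions for all $p$, write the isomorphism as a triangular exponent matrix, derive $s_{123}=t_{123}$, $s_{134}=t_{134}$ and the diagonal constraints from the first two relations, and reduce the $g_4$-coefficient of the remaining relation to a congruence modulo $p^{\nu_p(d(s))}$ via the ideal $(t_{123},t_{134})\Z_p$; your displayed equation agrees with the paper's Equation (3) after substituting $m_{44}=m_{11}^2m_{22}$. The only cosmetic differences are that you kill the $g_1$-coefficient of $\phi(g_2)$ directly from the relation $[g_3,g_2]=1$ where the paper invokes a two-step-centralizer lemma, and you collect commutators by hand where the paper uses computer-generated Hall-polynomial equations.
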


For a prime $p$ let $t_i \in T(2,1,1)$ be defined by $t_{i,123} = t_{i,134} 
= p$ and $t_{i,124} = i$. Then $\{ G(t_i) \mid 1 \leq i \leq \lfloor 
\frac{p}{2} \rfloor \}$ consists of pairwise non-isomorphic groups with 
the same finite quotients and hence $|\g(G(t_i))| \geq \lfloor \frac{p}{2} 
\rfloor$.  In Section \ref{proofs} we observe that if $G$ is of type 
$(2,1,1)$ and $A$ is free abelian of finite rank, then $|\g(G \times A)| 
\geq |\g(G)|$ follows. Hence we obtain the following which, in turn, proves 
Theorem \ref{unbounded}.

\begin{corollary}
\label{coro}
For each $n \geq 2$, there is no finite upper bound to the size of the 
genus of a $\T$-group of type $(n,1,1)$.
\end{corollary}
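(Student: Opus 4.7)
The plan is to reduce the corollary to the type $(2,1,1)$ case, where Theorem \ref{class211} provides full control, and then to promote the resulting examples to type $(n,1,1)$ for $n \ge 3$ by taking direct products with free abelian groups.

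For the base case $n = 2$, I would follow the construction sketched immediately before the corollary. Fix a prime $p$ and, for $1 \le i \le \lfloor p/2 \rfloor$, define $t^{(i)} \in \Z(4)$ by $t^{(i)}_{123} = t^{(i)}_{134} = p$, $t^{(i)}_{124} = i$ and $t^{(i)}_{234} = 0$. Each $t^{(i)}$ lies in $T(2,1,1)$ because $d(t^{(i)}) = \gcd(p,p) = p$ and $0 \le i \le p/2$, so by Theorem \ref{class211}(a) the groups $G(t^{(i)})$ are pairwise non-isomorphic. I would then verify via Theorem \ref{class211}(b) that all these groups share the same finite quotients: the first bullet is immediate since the $123$- and $134$-entries all equal $p$; for the second bullet, the only maximal prime power dividing $d(t^{(i)}) = p$ is $p$ itself, and for any two indices $i, j$ the integer $w_p$ determined by $w_p \equiv i \cdot j^{-1} \pmod{p}$ is non-zero modulo $p$ (because $1 \le i < p$) and satisfies $p \mid (j w_p - i)$. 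Hence $|\g(G(t^{(1)}))| \ge \lfloor p/2 \rfloor$, and letting $p \to \infty$ yields the unboundedness of the genus for type $(2,1,1)$.

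For $n \ge 3$, let $A$ be free abelian of rank $n - 2$. Since $A$ is central in $G \times A$, a short calculation gives $\gamma_j(G \times A) = \gamma_j(G)$ for $j \ge 2$, from which one reads off that the isolator series of $G \times A$ has quotient ranks $n, 1, 1$; thus $G \times A$ is of type $(n,1,1)$. Applying the observation $|\g(G \times A)| \ge |\g(G)|$ of Section \ref{proofs} to the groups $G(t^{(i)})$ constructed above produces, for each prime $p$, at least $\lfloor p/2 \rfloor$ pairwise non-isomorphic $\T$-groups of type $(n,1,1)$ with equal finite quotients, which proves the corollary.

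The only substantive obstacle sits inside the inequality $|\g(G \times A)| \ge |\g(G)|$. One half is routine: $F(H \times A) = F(H' \times A)$ whenever $F(H) = F(H')$, because profinite completion commutes with direct products. The harder half is to ensure that non-isomorphic $H, H' \in \g(G)$ give non-isomorphic products $H \times A$ and $H' \times A$; this is a cancellation statement for free abelian direct factors in the class $\T$, which is classical but deserves explicit citation since cancellation is known to fail in broader settings of nilpotent-by-finite groups.
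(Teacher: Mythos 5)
Your proposal follows essentially the same route as the paper: the same family $G(t^{(i)})$ with $t^{(i)}_{123}=t^{(i)}_{134}=p$ and $t^{(i)}_{124}=i$ for the base case, and the same promotion to type $(n,1,1)$ by taking the direct product with a free abelian group and using $|\g(G\times A)|\ge|\g(G)|$. The one step you correctly flag as needing justification --- cancellation of the free abelian direct factor --- is exactly where the paper invokes Hirshon \cite{Hir77}, using that groups of type $(2,1,1)$ have cyclic centers, so your argument is complete once that citation is supplied.
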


\subsection{Type $(3,1,1)$}

For $t \in \Z(5)$ define $d_1(t) = \gcd(t_{145}, t_{235})$ and
$d_2(t) = \gcd(t_{124}, t_{135}, d_1(t))$. Further, we write
\begin{align*}
    T(3,1,1) = \{ \, t \in \Z(5) \mid
        & \; 0 < t_{124}, t_{145}; \ 0 \le t_{235}; 
          \ 0 \leq t_{135} \leq \tfrac{d_1(t)}{2}; 
          \ 0 \leq t_{125} \leq \tfrac{d_2(t)}{2}; \\
        & \; t_{ijk} = 0 \mbox{ otherwise} \, \}.
\end{align*}
Part (a) of the following theorem follows from the work of Eick \& Engel 
\cite{EEn17} and part (b) is our second main result here.

\begin{theorem}\label{class311}
\begin{items}
\item[\rm (a)]
For each $\T$-group $G$ of type $(3,1,1)$ there exists a unique $t \in 
T(3,1,1)$ with $G \cong G(t)$.
\item[\rm (b)]
Let $s, t \in T(3,1,1)$. Then $F(G(s)) = F(G(t))$ if and only if
\begin{items}
\item[$\bullet$] 
$s_{ijk} = t_{ijk}$ for $(ijk) \in \{(124), (145), (235) \}$,
\item[$\bullet$]
for each maximal prime power $p^k$ dividing $d_1(s)$ there exists $w_p \in 
\Z$ with $p \nmid w_p$ and $p^k \mid (t_{135} w_p - s_{135})$, and
\item[$\bullet$]
for each maximal prime power $p^k$ dividing $d_2(s)$ there exists $v_p \in 
\Z$ with $p \nmid v_p$ and $p^k \mid (t_{125} v_p - s_{125})$.
\end{items}
\end{items}
\end{theorem}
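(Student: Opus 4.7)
The plan is to follow the same strategy that governs Theorem~\ref{class211}(b), reducing the question prime by prime to a local rigidity statement about nilpotent $\Z_p$-Lie algebras. Since $\T$-groups are residually finite, $F(G(s))=F(G(t))$ holds iff the profinite completions of $G(s)$ and $G(t)$ are isomorphic, and this in turn splits as $G(s)_p \cong G(t)_p$ for every prime $p$, where $G(-)_p$ denotes the pro-$p$ Mal'cev completion. Via the Mal'cev correspondence, each $G(t)_p$ corresponds to a nilpotent $\Z_p$-Lie algebra $L_p(t)$ whose structure constants are exactly those of the presentation $G(t)$, so the task is to characterise, for each $p$, when $L_p(s) \cong L_p(t)$, and then to recombine these local conditions.

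The first step is to verify that $t_{124}$, $t_{145}$ and $t_{235}$ are invariants of $L_p(t)$. Here $t_{124}$ is the order of the torsion of $I_2(G)/(\gamma_2(G) \cdot I_3(G))$, while $t_{145}$ and $t_{235}$ are the canonical entries, in the normal form of Theorem~\ref{class311}(a), of the rank-one commutator pairing $(G/I_2) \wedge (I_2/I_3) \to I_3$. Any $\Z_p$-isomorphism $L_p(s) \cong L_p(t)$ preserves these structural data, so we may assume throughout that $s$ and $t$ agree on these three coordinates and reduce the question to $t_{125}$ and $t_{135}$.

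With this reduction in hand, I would explicitly compute the action of $\Aut(L_p(s))$ on the pair $(s_{135}, s_{125})$. A Lie algebra automorphism is determined by an induced triple $(A, b, c) \in \GL_3(\Z_p) \times \Z_p^\times \times \Z_p^\times$ on the three graded layers of the isolator series, together with lower-triangular shears. Requiring that $t_{145}$, $t_{235}$, $t_{124}$ be fixed cuts $A$ and the pair $(b,c)$ down to a subgroup with determinantal constraints, and the residual action sends $s_{135} \mapsto w_p \cdot s_{135}$ modulo $d_1$ and $s_{125} \mapsto v_p \cdot s_{125}$ modulo $d_2$ for independent units $w_p, v_p \in \Z_p^\times$. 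Lifting such $p$-adic units to integers coprime to $p$, and combining the local conditions via the Chinese remainder theorem, yields precisely the conditions stated in (b).

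The hardest step, as in the proof of Theorem~\ref{class211}(b), is the explicit computation in the preceding paragraph: verifying that the residual freedom in $s_{135}$ is exactly a unit rescaling modulo $d_1 = \gcd(s_{145}, s_{235})$, and that the freedom in $s_{125}$ decouples to a unit rescaling modulo $d_2 = \gcd(s_{124}, s_{135}, d_1)$. The subtlety is that $s_{125}$ is a priori entangled with the off-diagonal entries of $A$, with the shears of $e_4$ by multiples of $e_5$, and with the rescalings $b$ and $c$; one must show that, after imposing fixity of the three invariants, every one of these operations contributes either a multiple of $d_2$ to $s_{125}$ or the unit rescaling $v_p$, with nothing in between. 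Once this decoupling is established, the Diophantine content matches the pattern already worked out in Theorem~\ref{class211}(b) and the same arguments transport with only notational adjustments.
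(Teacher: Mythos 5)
Your proposal is a plan rather than a proof, and the plan has two genuine problems.

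First, the reduction to nilpotent $\Z_p$-Lie algebras ``whose structure constants are exactly those of the presentation'' is not justified for all primes. A group of type $(3,1,1)$ has class $3$, so the Baker--Campbell--Hausdorff formula and its inverse involve denominators divisible by $2$ and $3$; for $p\in\{2,3\}$ the set $\log(\Z_p G)$ need not be a $\Z_p$-Lie lattice with the stated structure constants, and isomorphism of such lattices need not be equivalent to isomorphism of the pro-$p$ completions. The theorem's conditions quantify over \emph{all} maximal prime powers dividing $d_1(s)$ and $d_2(s)$, which certainly can include $2$ and $3$, so you cannot discard these primes. The paper avoids this entirely by never passing to a Lie algebra: it works with the group $\Z_p G$ defined via Hall polynomials (Lemma \ref{lem:GroupRG}) and reduces $F(G(s))=F(G(t))$ to solvability over $\Z_p$ of the relations of $G(t)$ evaluated at a block-upper-triangular exponent matrix (Theorem \ref{isom}(c), Lemma \ref{twostep}).

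Second, you explicitly label the decoupling of the residual action on $(s_{135},s_{125})$ --- unit rescaling mod $d_1$ and, independently, unit rescaling mod $d_2$ --- as ``the hardest step'' and then do not carry it out, asserting instead that it will match the $(2,1,1)$ pattern. That step \emph{is} the proof. In the paper it amounts to deriving the explicit equations
$t_{135}\,m_{11}m_{22}m_{33}^{-1}-s_{135}=s_{145}m_{33}^{-1}m_{34}+s_{235}m_{11}^{-1}m_{12}$ and a second, longer equation for $t_{125}m_{11}-s_{125}$ whose right-hand side involves $s_{124}$, $s_{135}$, $s_{145}$ and $s_{235}$, and then checking (i) that the two unit coefficients $m_{11}m_{22}m_{33}^{-1}$ and $m_{11}$ vary independently over $\Z_p^*$ even after the constraints $m_{44}=m_{11}m_{22}$, $m_{55}=m_{11}^2m_{22}$ and (when $s_{235}>0$) $m_{33}=m_{11}^2$ are imposed, and (ii) that the two right-hand sides range independently over the $\Z_p$-multiples of $d_1(s)$ and $d_2(s)$ because the free off-diagonal entries ($m_{34},m_{12}$ versus $m_{45},m_{23},m_{24},m_{13}$) are essentially disjoint between the two equations. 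Neither the shape of the second equation nor this independence is a formal consequence of the $(2,1,1)$ case (where there is only one such equation), so ``the same arguments transport with only notational adjustments'' is not a substitute for the computation. Your identification of the invariants is also imprecise: $t_{235}$ arises from $[g_3,g_2]$ with both $g_2,g_3$ in the top isolator layer, so it is not an entry of a pairing $(G/I_2)\wedge(I_2/I_3)\to I_3$; getting these identifications right (and showing the normal form forces equality rather than equality up to units) is part of what the explicit equations (1)--(3) accomplish.
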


\subsection{Type $(2,1,1,1)$}

For $t \in \Z(5)$ define $d_3(t) = \gcd(t_{123}, t_{145}, t_{235})$.
Consider the set
\begin{align*}
    T_0(2,1,1,1) = \{ \, t \in \Z(5) \mid
        & \; 0 < t_{123}, t_{134}, t_{145}; 
          \ 0 \le t_{124}, t_{125}, t_{135}, t_{235}; \\
        & \; t_{ijk} = 0 \mbox{ otherwise} \, \}.
\end{align*}
We define a relation on $T_0(2,1,1,1)$ over a ring $R$ that is either 
$\Z$ or $\Z_p$ via $s \sim_R t$ if $s_{ijk} = t_{ijk}$ for $(ijk) \in 
\{(123), (134), (145), (235)\}$ and there exist $u \in R^*$ and 
$v,w,x,y,z \in R$ so that
\begin{align*}
    t_{124} u &= s_{124} - s_{123} w + s_{134} v, \\
    t_{135} u &= s_{135} + s_{145} w + s_{134} x + s_{235} y, 
                 \mbox{ and } \\
    t_{125} u^2 &= s_{125} + s_{135} v + s_{124} x + s_{134} vx
                 + d_3(s) z.
\end{align*}
We note that this is an equivalence relation that corresponds to an 
action of the group $R^5 \rtimes R^*$ on $T_0(2,1,1,1)$. Let $T(2,1,1,1)$ 
be a set of representatives for $\sim_\Z$. Part (a) of the following 
theorem follows from the work of Eick \& Engel \cite{EEn17} and part (b) 
is our third main result here.

\begin{theorem}\label{class2111}
\begin{items}
\item[\rm (a)]
For each $\T$-group $G$ of type $(2,1,1,1)$ there exists a unique $t \in 
T(2,1,1,1)$ with $G \cong G(t)$.
\item[\rm (b)]
Let $s, t \in T(2,1,1,1)$. Then $F(G(s)) = F(G(t))$ if and only if
$s \sim_{\Z_p} t$ for all primes $p$, or, equivalently, if and only if
\begin{items}
\item[$\bullet$] 
$s_{ijk} = t_{ijk}$ for $(ijk) \in \{(123), (134), (145), (235)\}$, and
\item[$\bullet$]
for each prime $p$ there are $u,v,w,x,y,z \in \Z$ with $p \nmid u$ so that 
with $p^k$ the maximal $p$-power dividing the product $s_{134} d_3(s)$ it follows that
\begin{align*}
 t_{124} u & \equiv s_{124} - s_{123} w + s_{134} v & \bmod \ p^k, \\
 t_{135} u & \equiv s_{135} + s_{145} w + s_{134} x + s_{235} y & 
   \bmod \ p^k, & \quad \text{and} \\
 t_{125} u^2 & \equiv s_{125} + s_{135} v + s_{124} x + s_{134} vx
   + d_3(s) z & \bmod \ p^k.
\end{align*}
\end{items}
\end{items}
\end{theorem}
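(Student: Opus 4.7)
The proof rests on the standard profinite translation of the genus problem: two finitely generated residually finite groups have the same finite quotients if and only if their profinite completions are isomorphic, and for $\T$-groups the nilpotent profinite completion decomposes as the product of its pro-$p$ parts. Hence $F(G(s)) = F(G(t))$ holds if and only if the pro-$p$ completions $\widehat{G(s)}_p$ and $\widehat{G(t)}_p$ are isomorphic for every prime $p$. The plan is therefore, for each fixed $p$, to show that such an isomorphism exists if and only if $s \sim_{\Z_p} t$, and then to convert this $\Z_p$-condition into the displayed integer congruences.

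To parameterize local isomorphisms, I would describe the basis changes of $\widehat{G(s)}_p$ that respect the isolator series of type $(2,1,1,1)$. On the rank-two abelianization $G/I_2$ such a change is given by a matrix in $\GL_2(\Z_p)$, while on each rank-one quotient $I_i/I_{i+1}$ it is a scalar in $\Z_p^*$; the commutator relations link these scalars to the determinant of the $\GL_2$-block, and in addition each basis element may be shifted by elements of deeper isolator factors. Expanding the commutators $[\varphi(g_j),\varphi(g_i)]$ modulo the isolator series and comparing the resulting structure constants with those of $G(t)$, the scalar and shift parameters collapse to the six entries $(u; v, w, x, y, z)$ and produce exactly the three displayed equations. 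The factor $u^2$ in the last equation appears because $t_{125}$ lies in the deepest quotient $I_4/I_5$ and absorbs two copies of the $\GL_2$-determinant; the bilinear term $s_{134} vx$ arises because a shift of $g_3$ by a multiple of $g_4$ interacts with the $\GL_2$-change on $g_1, g_2$ via the commutator coefficient $s_{134}$. Running this analysis in both directions yields $\widehat{G(s)}_p \cong \widehat{G(t)}_p$ if and only if $s \sim_{\Z_p} t$.

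The equivalence between $s \sim_{\Z_p} t$ and the listed congruences modulo $p^k$, with $p^k$ the maximal $p$-power dividing $s_{134} d_3(s)$, is a Hensel-style lifting argument. One direction is immediate by reduction. Conversely, given integers $u, v, w, x, y, z$ satisfying the three congruences with $p \nmid u$, the value of $u$ lifts directly to $\Z_p^*$, and the remaining parameters can be adjusted to exact $\Z_p$-solutions by successive approximation; the only coefficients that can cause a loss of $p$-adic precision during such adjustments are those in $s_{134}$ and $d_3(s)$, whose $p$-valuations are captured exactly by the threshold $p^k$. The threshold is sharp because the term $d_3(s) z$ is the only unconstrained $z$-contribution in the last equation, so any residue beyond $p^k$ can be absorbed by adjusting $z$.

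The principal obstacle will be the commutator bookkeeping in the second step: to write down the three equations one must expand products of four basis elements in a class-$4$ nilpotent group using the Hall collection process and the Hall-Witt identity, tracking every contribution up to $I_5$. The bilinear term $s_{134} vx$ in particular appears only after one respects the order of collection and recognises the interaction of the $v$- and $x$-shifts. A secondary concern is verifying that $\sim_R$ genuinely corresponds to a group action of $R^5 \rtimes R^*$; this follows from composing two consecutive basis changes and matching with multiplication in the semidirect product, but it does require explicit computation. Once both points are settled, the structural steps (the profinite reduction and the Hensel lift) are routine.
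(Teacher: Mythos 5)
Your proposal follows essentially the same route as the paper: reduce $F(G(s))=F(G(t))$ to isomorphism of the $\Z_p$-completions for all $p$ (Theorem \ref{main}(a)), parameterize such isomorphisms compatibly with the isolator series, extract the three displayed equations, and pass between $\Z_p$-solvability and integer congruences modulo $p^{\nu_p(s_{134}d_3(s))}$ by an explicit lifting. The two points you flag as remaining work are precisely where the paper invests its effort: it forces the top $2\times 2$ block to be triangular (rather than a full $\GL(2,\Z_p)$ block on $G/I_2$) via the characteristic two-step centralizer $C_3(G(t))=\langle g_2,\ldots,g_5\rangle$, and in the lifting step it corrects $v$ and $x$ by explicit multiples of $p^{\nu_p(d_3(s))}\bigl(s_{134}p^{-\nu_p(s_{134})}\bigr)^{-1}$ and checks that the residual in the third equation remains divisible by $p^{\nu_p(d_3(s))}$, which is exactly why the modulus is the product $s_{134}d_3(s)$.
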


\subsection{Type $(2,1,2)$}

For $k \in \N$ and $R \in \{\Z, \Z_p\}$ let $D_k(R)$ denote the congruence 
subgroup
\[ D_k(R) = \left\{ \, \begin{pmatrix} a_{11} & a_{12} \\ 
                  a_{21} & a_{22} \end{pmatrix}
            \; \middle| \; k \mid a_{12} \, \right\} \leq \GL(2, R). \]
For $a,b,c \in \N$ with $a \mid b$ define
$L(a,b,c) = \{ \, (x,y) \in \N_0^2 \mid x < \gcd(a,c), \ y < \gcd(b,c) \, \}$.
Then $L(a,b,c)$ is a finite set and the group $\GL(2, \Z_p)$
acts on this set via 
\[ (x,y) \ms (x,y)M \mod (\gcd(a,c), \gcd(b,c)), \]
where the mod-reduction is carried out independently in both components.
Let $O(a,b,c)$ denote a set of orbit representatives for the action of
$D_{b/a}(\Z)$ on $L(a,b,c)$.
We write
\begin{align*}
    T(2,1,2) = \{ \, t \in \Z(5) \mid
        & \; 0 < t_{123}, t_{134}, t_{235}; \  t_{134} \mid t_{235}; \  
           (t_{124}, t_{125}) \in O(t_{134}, t_{235}, t_{123}); \\
        & \; t_{ijk} = 0 \text{ otherwise} \, \}.
\end{align*}
Part (a) of the following theorem follows from the work of Eick \& Engel 
\cite{EEn17} and part (b) is our fourth main result here.

\begin{theorem}\label{class212}
\begin{items}
\item[\rm (a)]
For each $\T$-group $G$ of type $(2,1,2)$ there exists a unique $t \in 
T(2,1,2)$ with $G \cong G(t)$.
\item[\rm (b)]
Let $s, t \in T(2,1,2)$ and $k = s_{235}/s_{134}$. Then 
$F(G(t)) = F(G(s))$ if and only if
\begin{items}
\item[$\bullet$] 
$t_{ijk} = s_{ijk}$ for $(ijk) \in \{ (123), (134), (235) \}$, and
\item[$\bullet$]
for every prime $p$ the tuples $(s_{124}, s_{125})$ and $(t_{124}, t_{125})$ 
are in the same orbit under the action of $D_k(\Z_p)$ on 
$L(t_{134}, t_{235}, t_{123})$.
\end{items}
\end{items}
\end{theorem}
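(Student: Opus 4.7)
The plan is to reduce the question to the pro-$p$ level and analyze isomorphisms between pro-$p$ completions one prime at a time. By a standard consequence of Pickel's work, two $\T$-groups $G, H$ satisfy $F(G) = F(H)$ if and only if their profinite completions are isomorphic, and since nilpotent profinite groups decompose over primes, this is equivalent to $\hat{G(s)}_p \cong \hat{G(t)}_p$ for every prime $p$. Before going local, I would verify that $t_{123}, t_{134}, t_{235}$ are invariants of the abstract group: from the relations, $I_2(G(t))/\gamma_2(G(t))I_3(G(t))$ is cyclic of order $t_{123}$, while $I_3(G(t))/\gamma_3(G(t))$ has elementary divisors $(t_{134}, t_{235})$ (using $t_{134} \mid t_{235}$ and $I_4 = \{1\}$). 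Both are arithmetic invariants of $G$ and hence of $F(G)$, so the equalities $s_{ijk} = t_{ijk}$ for $(ijk) \in \{(123), (134), (235)\}$ are forced.

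Fix a prime $p$ and assume those equalities. Any isomorphism $\phi\colon \hat{G(s)}_p \to \hat{G(t)}_p$ is determined by its induced action on the graded pieces of the isolator filtration: a matrix $M = (m_{ij}) \in \GL(2, \Z_p)$ on $I_1/I_2$, a unit $\alpha \in \Z_p^*$ on $I_2/I_3$, and a matrix $N \in \GL(2, \Z_p)$ on $I_3/I_4$. Lifting $\phi$ to the generators and expanding the relations $[g_3, g_1] = g_4^{t_{134}}$ and $[g_3, g_2] = g_5^{t_{235}}$ in $\hat{G(t)}_p$ modulo $I_4$ yields four linear equations relating $M$, $\alpha$ and $N$. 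With $k = t_{235}/t_{134}$, these equations determine $N$ from $(M, \alpha)$ and force the off-diagonal condition $k \mid m_{12}$, i.e.\ $M \in D_k(\Z_p)$. Matching the $g_3$-component of $[\phi(g_2), \phi(g_1)]$ with that of $\phi(g_3^{s_{123}} g_4^{s_{124}} g_5^{s_{125}})$ then forces $\alpha = \det(M)$.

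Finally, matching the $g_4$- and $g_5$-components of the same relation produces the transformation law from $(s_{124}, s_{125})$ to $(t_{124}, t_{125})$. The free parameters in the lift of $\phi$ — the $I_3$-parts of $\phi(g_1), \phi(g_2)$ and the $I_4$-part of $\phi(g_3)$ — exactly contribute adjustments by multiples of $t_{123}$ in one direction and of $t_{134}$, respectively $t_{235}$, in the other. After reducing by these contributions, the congruences become equations in $\Z/\gcd(t_{134}, t_{123})$ and $\Z/\gcd(t_{235}, t_{123})$, and the induced action on the pair of residues is exactly the action of $D_k(\Z_p)$ on $L(t_{134}, t_{235}, t_{123})$ from Section \ref{results}. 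Conjoining over all primes gives the statement. The main obstacle will be tracking the Hall-collection correction terms in $[\phi(g_2), \phi(g_1)]$ arising from nilpotency class $3$, and verifying that they all land inside the subgroup generated by $g_4^{t_{134}}$ and $g_5^{t_{235}}$, so that after reduction modulo the two gcd's the transformation rule becomes exactly the stated orbit action and contributes no further constraints.
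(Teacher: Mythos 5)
Your proposal follows essentially the same route as the paper: pass to $\Z_p$-completions via Pickel/Borel, constrain the isomorphism to block upper triangular form along the isolator series, read off $s_{123},s_{134},s_{235}$ as invariants, extract the divisibility condition $k \mid {}$(off-diagonal entry) that places the acting matrix in $D_k(\Z_p)$, and interpret the remaining two components of the relation $[g_2,g_1]=\cdots$ as the orbit condition on $(t_{124},t_{125})$ modulo the two gcd's. The one step you explicitly defer --- checking that the degree-$2$ and degree-$3$ Hall-collection correction terms range exactly over the multiples of $t_{134}$, $t_{235}$ and $t_{123}$ and impose no further constraints --- is precisely the explicit polynomial computation the paper carries out (by computer) in its Equations (1)--(3), so your outline is sound; only minor indexing slips remain (the constraint on the top block $M_1$ is on its $(2,1)$-entry, with $D_k$ containing the bottom block $M_3$ and the acting matrix $\det(M_1)^{-1}M_3$, and the elementary divisors of $I_3/\gamma_3$ carry an extra factor $t_{123}$).
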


An explicit description of the orbits in Theorem \ref{class212}(b)  
is given in Remark \ref{OrbitsOnL}.

\section{Methods}\label{methods}

This section describes the principal ideas used to prove the above results.
Let $G$ be a $\T$-group with basis $(g_1, \ldots, g_n)$. For 
$x = (x_1, \ldots, x_n) \in \Z^n$ we write $g^x$ as a short form of
$g_1^{x_1} \cdots g_n^{x_n}$. The multiplication,
inversion and powering in $G$ can be represented by functions
$f_j \colon \Z^n \times \Z^n \ra \Z$ and 
$k_j \colon \Z \times \Z^n \ra \Z$ defined via
\[ g_1^{f_1(x,y)} \cdots g_n^{f_n(x,y)} = g^x g^y \quad \mbox{and} \quad
 g_1^{k_1(\ell,x)} \cdots g_n^{k_n(\ell,x)} = (g^x)^\ell. \]
Hall \cite{Hal69} proved that the 
functions $f_1, \ldots, f_n$ and $k_1, \ldots, k_n$ can be described 
by rational polynomials, nowadays called Hall polynomials. We note the 
following.

\begin{lemma}\label{unique}
The Hall polynomials of a $\T$-group with respect to a fixed basis are unique.
\end{lemma}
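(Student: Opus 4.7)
The plan is to reduce the claim to the elementary polynomial identity principle over $\Z$, after first observing that the functions $f_j$ and $k_j$ themselves are uniquely determined by the group structure.

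First I would argue that, once the basis $(g_1,\ldots,g_n)$ is fixed, the set-theoretic functions $f_j\colon \Z^n\times\Z^n\to\Z$ and $k_j\colon \Z\times\Z^n\to\Z$ are uniquely determined. This is immediate from the uniqueness of the normal form $g^x = g_1^{x_1}\cdots g_n^{x_n}$: since $G_i/G_{i+1}$ is infinite cyclic generated by $g_i G_{i+1}$, each element of $G$ has exactly one expression of this form, so the exponent tuples produced by the product $g^x g^y$ and the power $(g^x)^\ell$ are unambiguous. Hence uniqueness of the Hall polynomials amounts to the claim: if two polynomials $p,q\in\Q[X_1,\ldots,X_m]$ agree on $\Z^m$ (with $m=2n$ for $f_j$ and $m=n+1$ for $k_j$), then $p=q$.

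Next I would prove this polynomial identity principle by induction on $m$. For $m=1$, a nonzero polynomial in $\Q[X]$ has only finitely many roots, so a polynomial vanishing on $\Z$ must be zero. For the inductive step, write
\[ P(X_1,\ldots,X_m) = \sum_{i\geq 0} P_i(X_1,\ldots,X_{m-1})\, X_m^i \]
with $P_i\in\Q[X_1,\ldots,X_{m-1}]$, and suppose $P$ vanishes on $\Z^m$. For each fixed $(a_1,\ldots,a_{m-1})\in\Z^{m-1}$, the univariate polynomial $P(a_1,\ldots,a_{m-1},X_m)\in\Q[X_m]$ vanishes on all of $\Z$, hence is identically zero, so each coefficient $P_i(a_1,\ldots,a_{m-1})$ vanishes. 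By induction each $P_i$ is the zero polynomial, so $P=0$. Applying this to the difference of two candidate Hall polynomials yields the lemma.

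There is really no serious obstacle here; the only thing that needs a moment of care is to make sure the domain of evaluation is all of $\Z^m$ (not some subset like nonnegative integers), which is clear because $f_j$ and $k_j$ are defined on all integer tuples. The argument is otherwise a direct citation of the standard fact that an infinite integral grid is a Zariski-dense test set for rational polynomials.
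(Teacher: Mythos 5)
Your proof is correct and follows essentially the same route as the paper: reduce uniqueness to the fact that a rational polynomial vanishing on all of $\Z^m$ is the zero polynomial, which the paper simply cites and you prove by induction on the number of variables. The extra details you supply (well-definedness of the exponent functions via the normal form, and the inductive proof of the vanishing principle) are accurate but not needed beyond what the paper records.
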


\begin{proof}
This follows directly from the observation that a multivariate rational
polynomial $h \in \Q[z_1, \ldots, z_m]$ with
$h(z) = 0$ for all $z = (z_1, \ldots, z_m) \in \Z^m$ is the zero polynomial. 
\end{proof}

As a next step we extend the ring of exponents of group elements from $\Z$
to the rational numbers $\Q$, the $p$-adic integers $\Z_p$ or the $p$-adic
rationals $\Q_p$. The following lemma provides a starting point for this.

\begin{lemma}\label{contained}
Let $f \in \Q[z_1, \ldots, z_m]$ so that $f(a) \in \Z$ for all $a = (a_1,
\ldots, a_m) \in \Z^m$ and let $R = \Q$, $\Z_p$ or $\Q_p$. Then $f(a) \in R$ 
for all $a \in R^m$.
\end{lemma}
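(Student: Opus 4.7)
The plan is to split according to the three cases for $R$. The cases $R = \Q$ and $R = \Q_p$ are essentially formal: since $f$ has coefficients in $\Q \subseteq \Q_p$ and both $\Q$ and $\Q_p$ are rings closed under the arithmetic operations, plugging in an element of $\Q^m$ (respectively $\Q_p^m$) produces an element of $\Q$ (respectively $\Q_p$). So the only substantive case is $R = \Z_p$.

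For $R = \Z_p$, I would argue by continuity and density. The polynomial $f$ defines a function $\Z_p^m \to \Q_p$ that is continuous in the $p$-adic topology, since each monomial $a \mapsto c \cdot a_1^{e_1}\cdots a_m^{e_m}$ with $c \in \Q$ is continuous and $f$ is a finite sum of such monomials. By the hypothesis, $f$ maps $\Z^m$ into $\Z \subseteq \Z_p$. Since $\Z$ is dense in $\Z_p$, the product $\Z^m$ is dense in $\Z_p^m$, and since $\Z_p$ is the closed unit ball of the normalized $p$-adic absolute value on $\Q_p$, it is a closed subset of $\Q_p$. Thus the image $f(\Z_p^m)$ is contained in the closure of $f(\Z^m) \subseteq \Z_p$, giving $f(\Z_p^m) \subseteq \Z_p$. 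There is no real obstacle here; the only point to be careful about is to work in $\Q_p$ as the ambient space (so that continuity makes sense) and to note that $\Z_p$ is closed inside $\Q_p$.
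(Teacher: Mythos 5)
Your proof is correct, and for the substantive case $R=\Z_p$ it takes a more direct route than the paper. The paper first invokes the classical fact that an integer-valued polynomial is a $\Z$-linear combination of products of binomial coefficients $\binom{z_1}{s_1}\cdots\binom{z_m}{s_m}$, and only then applies a continuity argument to show $\binom{a}{s}\in\Z_p$ for $a\in\Z_p$, approximating $a$ by integers and using that $\Z_p$ is closed. You skip the decomposition entirely and apply the same topological mechanism directly to $f$: the polynomial function $\Z_p^m\to\Q_p$ is continuous, $\Z^m$ is dense in $\Z_p^m$, $f(\Z^m)\subseteq\Z\subseteq\Z_p$, and $\Z_p$ is closed in $\Q_p$, so $f(\Z_p^m)\subseteq\overline{f(\Z^m)}\subseteq\Z_p$. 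This is a strict simplification: the binomial-basis theorem is doing no work in the paper's argument that your density step does not already do, and your version avoids having to quote that structure theorem. What the paper's detour buys is only a pointer to the standard description of integer-valued polynomials; your handling of the easy cases $R=\Q$ and $R=\Q_p$ matches the paper's.
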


\begin{proof}
The cases $R = \Q$ and $R = \Q_p$ are obvious, since $f$ has rational 
coefficients. It remains to consider the case $R=\Z_p$. It is well-known
that a rational polynomial which takes integer values on integer inputs is
a $\Z$-linear combination of polynomials of the form
\[ f_{s_1, \ldots, s_m}(z_1, \ldots, z_m) :=
    \binom{z_1}{s_1} \cdots \binom{z_m}{s_m},\]
where $s_1, \ldots, s_m \in \N_0$, $z_1, \ldots, z_m$ are indeterminates and
\[ \binom{z_i}{s_i} = \frac{z_i (z_i-1) \cdots (z_i-s_i+1)}{s_i!}. \]
Clearly, $\binom{a}{s} \in \Q_p$ holds for $a \in \Z_p$ and $s \in \N_0$.
By definition, $a$ is the limit of a Cauchy sequence $(a_i)_{i \in \N}$ of 
non-negative integers with respect to the $p$-adic absolute value. Further 
$\binom{a_i}{s} \in \Z$ for all $i$. As polynomial functions are continuous and 
$\Z_p$ is closed and therefore complete, we obtain that $\binom{a}{s} \in \Z_p$.
\end{proof}

\subsection{Completions}

Let $G$ be a $\T$-group with basis $(g_1, \ldots, g_n)$. Then 
$G = \{ g^a \mid a \in \Z^n\}$. For $R = \Q$, $\Z_p$ or $\Q_p$ we define
the set $RG$ via
\[ RG = \{ g^a \mid a \in R^n \}.\]
The Hall polynomials $f_1, \ldots, f_n$ of $G$ facilitate a multiplication 
on $RG$: For $g^a, g^b \in RG$ we define $g^a \cdot g^b := g^{f(a,b)} \in RG$,
where $f(a,b) = (f_1(a,b), \ldots, f_n(a,b))$.

\begin{lemma}\label{lem:GroupRG}
Let $G$ be a $\T$-group and $R = \Q$, $\Z_p$ or $\Q_p$. 
\begin{items}
\item[\rm (a)]
$RG$ equipped with the multiplication via Hall polynomials is a group.
\item[\rm (b)]
The isomorphism type of the group $RG$ is independent of the chosen basis.
\end{items}
\end{lemma}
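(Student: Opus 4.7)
The overall plan is to reduce both (a) and (b) to the observation that any identity between polynomial expressions in the Hall polynomials which holds on $\Z^n$ must hold on $R^n$. Concretely, if $p, q \in \Q[z_1, \ldots, z_m]$ are polynomials with $p(a) = q(a)$ for all $a \in \Z^m$, then $p - q$ vanishes on $\Z^m$, hence is the zero polynomial by Lemma \ref{unique}, and therefore $p(a) = q(a)$ for all $a \in R^m$; moreover $p$ takes $R$-values on $R$-inputs by Lemma \ref{contained}. This principle will be applied repeatedly, so all the work happens inside $\Z^n$ before being transported to $R^n$.

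For part (a), I would verify the group axioms one at a time. Associativity follows because $f(f(x,y),z) = f(x,f(y,z))$ holds on $\Z^{3n}$ (since $G$ is a group) and hence as a polynomial identity, so it extends to $R^{3n}$. For the identity element, note that $g^{0} = 1_G$ in $G$, so $f(0,x) = x = f(x,0)$ on $\Z^n$ and hence on $R^n$. For inverses, I would use the powering Hall polynomials $k = (k_1, \ldots, k_n)$: the identity $f(x, k(-1,x)) = 0 = f(k(-1,x), x)$ holds on $\Z^n$ and therefore on $R^n$, which shows $g^{k(-1,a)}$ is a two-sided inverse of $g^a$ for every $a \in R^n$. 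Closure is automatic since $f$ maps $R^n \times R^n \to R^n$ by Lemma \ref{contained}.

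For part (b), let $(g_1, \ldots, g_n)$ and $(h_1, \ldots, h_n)$ be two bases of $G$. For each $i$ there is a unique $c_i \in \Z^n$ with $g_i = h^{c_i}$, and composing Hall polynomials of the $h$-basis yields a polynomial map $\phi \colon \Z^n \to \Z^n$ with $g^a = h^{\phi(a)}$ for every $a \in \Z^n$; the symmetric construction gives $\psi$ with $h^b = g^{\psi(b)}$. Since $\phi$ and $\psi$ are built from rational polynomials taking integer values on integer inputs, by Lemma \ref{contained} they extend to maps $R^n \to R^n$. The relations $\psi \circ \phi = \mathrm{id}$ and $\phi \circ \psi = \mathrm{id}$ hold on $\Z^n$ and therefore on $R^n$, so $\phi$ is a bijection $RG \to RG'$ (where $RG'$ denotes the completion built from the $h$-basis). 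Finally, writing $f$ and $F$ for the Hall multiplication polynomials in the $g$- and $h$-bases respectively, the identity $\phi(f(a,b)) = F(\phi(a), \phi(b))$ is checked on $\Z^{2n}$, where it simply says that $\phi$ respects the multiplication inherited from $G$, and so it extends to $R^{2n}$, giving the required isomorphism.

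The main obstacle is really a bookkeeping one: everything hinges on the extension principle above, and the only step that requires thought is confirming that the change-of-basis maps $\phi, \psi$ can be assembled as polynomial maps at all. This in turn follows because $g^a = h^{c_1 a_1} \cdots h^{c_n a_n}$ is the result of iterating the multiplication and powering Hall polynomials of the $h$-basis, each of which is polynomial, so their composition is polynomial; once this is in place, both statements fall out purely from Lemma \ref{unique} and Lemma \ref{contained}.
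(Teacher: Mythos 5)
Your proof is correct and follows essentially the same route as the paper: both parts reduce to the principle that a polynomial identity verified on $\Z^n$ extends to $R^n$ via Lemmas \ref{unique} and \ref{contained}, applied to associativity, identity, inverses, and the change-of-basis map. The only (harmless) difference is that the paper establishes (b) in the slightly more general form that any isomorphism between two $\T$-groups extends to their $R$-completions --- a form it reuses later in the proof of Theorem \ref{isom}(c) --- whereas you specialize directly to two bases of a single group, which suffices for the statement as given.
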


\begin{proof}
(a) The multiplication is well-defined by Lemmas \ref{unique} and 
\ref{contained}. It is not difficult to observe that an identity element
and inverses are contained in $RG$. The proof of Lemma \ref{unique} 
implies that the multiplication is associative, as the multiplication 
in $G$ is associative. \\
(b) Let $(g_1, \ldots, g_n)$ be a basis for $G$ and let 
$(h_1, \ldots, h_n)$ be a basis for another $\T$-group $H$.
Suppose that $\varphi \colon G \to H$ is an isomorphism.
We show that $\varphi$ extends to an isomorphism $RG \ra RH$.
Applying this with $G = H$ and two different bases then yields 
the desired result.

For $1 \leq i \leq n$ let $w_i = \varphi(g_i) = h^{r_i}$ with $r_i \in \Z^n$.
The Hall polynomials $\ol{f}_1, \ldots, \ol{f}_n$ of $H$ and the values
$r_1, \ldots, r_n$ can be used to determine rational polynomials $m_1, 
\ldots, m_n$ satisfying that
$\varphi(g^x) = w_1^{x_1} \cdots w_n^{x_n} = h_1^{m_1(x)} \cdots h_n^{m_n(x)}$
holds for all $x \in \Z^n$.
Since $\varphi$ is a group homomorphism, it follows that 
\[ h^{m(f(x,y))} = \varphi(g^{f(x,y)}) = \varphi(g^x g^y) 
= \varphi(g^x) \varphi(g^y) = h^{m(x)} h^{m(y)} = h^{\ol{f}(m(x),m(y))} \]
and thus $m(f(x,y)) = \ol{f}(m(x),m(y))$ for all $x,y \in \Z^n$.
This equation extends to $x,y \in R^n$ and thus $\varphi$ extends to a 
group homomorphism $\tilde{\varphi} \colon RG \ra RH$. 
Similarly, the inverse of $\varphi$ extends to a homomorphism $RH \to RG$,
and this extension inverts $\tilde{\varphi}$. Thus $\tilde{\varphi}$ is
an isomorphism.
\end{proof}

A group $H$ is called {\em $\Q$-powered} if for every $h \in H$ and every
$m \in \N$ there exists a unique $k \in H$ with $k^m = h$.
A $\Q$-powered hull of a $\T$-group $G$ is a $\Q$-powered group $H$
containing $G$ with the property that for every $h \in H$ there exists 
$m \in \N$ with $h^m \in G$. Malcev \cite{Mal51} observed that a 
$\Q$-powered hull of a $\T$-group exists and that it is unique up to isomorphism.
The construction of $\Q G$ implies the following; its elementary proof is 
left to the reader.

\begin{theorem}\label{Qpowered}
Let $G$ be a $\T$-group. Then $\Q G$ is a $\Q$-powered hull of $G$.
\end{theorem}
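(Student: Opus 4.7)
The plan is to verify the three defining properties of a $\Q$-powered hull in sequence: first, that $G$ sits inside $\Q G$ as a subgroup; second, that every element of $\Q G$ admits a unique $m$-th root for each $m \in \N$; and third, that every element of $\Q G$ has a positive integer power lying in $G$. The first property is immediate from the construction: the inclusion $g^a \ms g^a$ for $a \in \Z^n \subset \Q^n$ is a homomorphism because the multiplications on both sides are given by the same Hall polynomials $f_j$, and it is clearly injective.

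For the root property, the key is to exploit the identity $((g^x)^\ell)^m = (g^x)^{m\ell}$, which for all $m, \ell \in \Z$ and $x \in \Z^n$ rewrites as $k_j(m, k(\ell, x)) = k_j(m\ell, x)$ for $1 \leq j \leq n$, where $k(\ell, x) = (k_1(\ell, x), \ldots, k_n(\ell, x))$. Applying Lemma \ref{unique} with $m, \ell$ and the components of $x$ all treated as variables, the two sides agree as rational polynomials, and so by Lemma \ref{contained} the identity extends to all $m, \ell \in \Q$ and $x \in \Q^n$. Combined with the trivial $k(1, x) = x$, this shows that for $h = g^a \in \Q G$ and $m \in \N$ the element $g^{k(1/m, a)}$ is an $m$-th root of $h$; the same identity read in reverse yields uniqueness, since any $g^b$ with $(g^b)^m = h$ must satisfy $g^b = g^{k(1/m, k(m, b))} = g^{k(1/m, a)}$.

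For the third property, I would proceed by induction on the Hirsch length $n$, choosing the basis of $G$ so that $\langle g_n \rangle$ is a pure central cyclic subgroup (a Mal'cev basis, which refines the isolator series). The base case $n = 0$ is trivial. For $n \geq 1$ the quotient $\ol G = G / \langle g_n \rangle$ is again a $\T$-group, now of Hirsch length $n - 1$, and the assignment $g^{(a_1, \ldots, a_n)} \ms \ol g^{(a_1, \ldots, a_{n-1})}$ defines a surjective homomorphism $\pi \colon \Q G \ra \Q \ol G$ with kernel $\{ g_n^\alpha \mid \alpha \in \Q \}$. By the inductive hypothesis applied to $\ol G$, for each $h \in \Q G$ there exists $m \in \N$ with $\pi(h^m) = \pi(h)^m \in \ol G$, so $h^m = g \cdot g_n^c$ with $g \in G$ and $c \in \Q$. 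Choosing $d \in \N$ with $cd \in \Z$ and using that $g_n$ is central in $\Q G$ (a direct consequence of Lemma \ref{unique}, since the commutators $[g_i, g_n]$ vanish on $G$ and hence vanish as Hall-polynomial expressions) yields $h^{md} = g^d g_n^{cd} \in G$.

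The step I expect to be the most delicate is the polynomial-identity argument in the second paragraph: one must recognise that relations such as $k_j(m\ell, x) = k_j(m, k(\ell, x))$, which a priori we know only for integer inputs, are genuine identities of rational polynomials in all of the variables $m, \ell, x_1, \ldots, x_n$, so that Lemma \ref{contained} legitimately permits the substitution $\ell = 1/m$. Once this point is secured, both existence and uniqueness of roots follow cleanly, and the inductive passage to $G / \langle g_n \rangle$ becomes routine bookkeeping.
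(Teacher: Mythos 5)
The paper deliberately omits this proof (``its elementary proof is left to the reader''), and your argument is a correct and complete realization of exactly what is intended: you verify the three defining properties of a $\Q$-powered hull by extending Hall-polynomial identities from integer to rational exponents via Lemmas \ref{unique} and \ref{contained}, which is the mechanism the whole section is built on. The only point you might note in passing is that the induced generating sequence of $G/\langle g_n\rangle$ need not refine that quotient's isolator series, but the construction of $\Q\ol{G}$ and Lemma \ref{lem:GroupRG}(b) work for any central series with infinite cyclic quotients, so your induction goes through.
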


The following theorem summarizes the relationship between properties of 
pairs of $\T$-groups and isomorphisms of their completions. Recall that 
groups $G$ and $H$ are {\em commensurable} if there are subgroups of 
finite index $\ti{G} \leq G$ and $\ti{H} \leq H$ with $\ti{G} \cong \ti{H}$. 
The Malcev-correspondence associates a rational Lie algebra $\Lambda(G)$
with a $\T$-group $G$. This can be constructed explicitly via the 
inverse Baker-Campbell-Hausdorff formula, see \cite[Sec.\@ 2]{Pic71}
or \cite{ALi07} for details.

\begin{theorem}\label{main}
Let $G$ and $H$ be $\T$-groups. 
\begin{items}
\item[\rm (a)]
$F(G) = F(H)$ if and only if
$\Z_p G \cong \Z_p H$ for all primes $p$.
\item[\rm (b)]
$G$ and $H$ are commensurable if and only if $\Q G \cong \Q H$ or, 
equivalently, $\Lambda(G) \cong \Lambda(H)$.
\item[\rm (c)]
If $G$ and $H$ are commensurable, then 
$\Z_p G \cong \Z_p H$ for almost all primes $p$.
\item[\rm (d)]
If $\Z_p G \cong \Z_p H$ for a prime $p$, then $G$ and $H$ have 
the same type.
\end{items}
\end{theorem}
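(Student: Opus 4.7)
The plan is to handle the four parts separately, using the Hall-polynomial construction of $\Z_p G$ and $\Q G$ from Lemma \ref{lem:GroupRG} together with classical results on profinite and Malcev completions.

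For part (a), the first step is to identify the group $\Z_p G$ built from Hall polynomials with the usual pro-$p$ completion $\widehat{G}_p$ of $G$. The key observations are that the Hall polynomials are continuous in the $p$-adic topology, that $G = \{g^a \mid a \in \Z^n\}$ is dense in $\Z_p G$, and that every finite $p$-quotient of $G$ extends through $\Z_p G$ by Lemma \ref{contained}. A universal-property argument then gives $\Z_p G \cong \varprojlim_N G/N$ where $N$ ranges over the normal subgroups of $p$-power index. Since a $\T$-group is nilpotent, its profinite completion $\widehat{G}$ splits as the direct product $\prod_p \widehat{G}_p$. For finitely generated residually finite groups it is classical that $F(G) = F(H)$ if and only if $\widehat{G} \cong \widehat{H}$, and in the nilpotent case this product decomposition reduces the latter to $\Z_p G \cong \Z_p H$ for every prime $p$.

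For part (b), Theorem \ref{Qpowered} identifies $\Q G$ with the (essentially unique) $\Q$-powered hull of $G$. If $\ti{G} \leq G$ and $\ti{H} \leq H$ are finite-index subgroups with $\ti{G} \cong \ti{H}$, then $\Q \ti{G} \cong \Q \ti{H}$; the hull property gives $\Q \ti{G} = \Q G$ and $\Q \ti{H} = \Q H$, so $\Q G \cong \Q H$. Conversely, an isomorphism $\Q G \cong \Q H$ transports $G$ into $\Q H$; clearing denominators lands $G$ in a finite-index subgroup of $H$, yielding commensurability. The equivalence $\Q G \cong \Q H \Longleftrightarrow \Lambda(G) \cong \Lambda(H)$ is the Malcev correspondence: the inverse Baker--Campbell--Hausdorff formula gives a bijection between $\Q$-powered nilpotent groups and rational nilpotent Lie algebras that preserves isomorphism.

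Part (c) then follows immediately: pick finite-index $\ti{G} \leq G$ and $\ti{H} \leq H$ with $\ti{G} \cong \ti{H}$; for any prime $p$ coprime to $[G : \ti{G}]\,[H : \ti{H}]$ the inclusions $\ti{G} \hookrightarrow G$ and $\ti{H} \hookrightarrow H$ induce isomorphisms of pro-$p$ completions, so $\Z_p G \cong \Z_p \ti{G} \cong \Z_p \ti{H} \cong \Z_p H$ by the identification from part (a). For part (d), the isolator series $I_1(G) > \ldots > I_{c+1}(G)$ has torsion-free abelian quotients of ranks $d_1, \ldots, d_c$; it is characteristic, and under the identification $\Z_p G \cong \widehat{G}_p$ its image in $\Z_p G$ has quotients that are free $\Z_p$-modules of the same ranks. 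Any isomorphism $\Z_p G \cong \Z_p H$ preserves this characteristic series, so the $d_i$ must agree and the type is determined. The main obstacle I expect is the identification in part (a) of the Hall-polynomial group $\Z_p G$ with the ordinary pro-$p$ completion $\widehat{G}_p$: one must verify that the $p$-adic topology on exponent vectors really matches the topology defined by finite $p$-quotients and that $\Z_p G$ has the correct universal property. Once this identification is in place, the remaining parts reduce to bookkeeping with the isolator series and the Malcev correspondence.
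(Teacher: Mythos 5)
Your proposal is correct and takes essentially the same route as the paper: both hinge on identifying the Hall-polynomial group $\Z_p G$ with Pickel's completion (the inverse limit of the $G/G^{p^i}$), after which (a)--(c) reduce to the classical results of Borel, Pickel and Malcev, and (d) is argued via the characteristic isolator series exactly as you do. The only difference is one of presentation: the paper cites Pickel's Lemmas 1.2, 1.3 and 1.8 for the steps you sketch from scratch (the universal-property identification, the splitting $\widehat{G} \cong \prod_p \widehat{G}_p$, and the finite-index/coprime-index argument), and in (d) it makes explicit the point you leave implicit, namely that $\Z_p I_i(G)$ is intrinsically characterized as the preimage of the torsion subgroup of $\Z_p G/\gamma_i(\Z_p G)$.
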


\begin{proof}
Pickel \cite{Pic71, Pic71a} proved similar theorems, but using a 
different definition for $\Z_p G$: For a finitely 
generated nilpotent group $G$, Pickel defines $\Z_p G$ as the completion 
of $G$ with respect to the uniform topology whose neighborhood basis of
the identity consists of the groups $\{ G^{p^i} \mid i \in \N \}$.
This completion coincides with the inverse limit of the quotients
$\{ G/G^{p^i} \mid i \in \N \}$.
If $G$ is a $\T$-group, then Pickel's definition for $\Z_p G$ coincides
with our definition, see Lemma 1.3 in \cite{Pic71}. Hence Pickel's results
apply in our setting. \\
(a) This is Lemma 1.2 in \cite{Pic71} and it is attributed to Borel. \\
(b)
This is often attributed to Malcev \cite{Mal51}. Alternatively, see 
\cite[p.\@ 330 \& Sec.\@ 2]{Pic71}. \\
(c)
Suppose that $G$ and $H$ are commensurable and let $\ti{G} \leq G$ and
$\ti{H} \leq H$ both of finite index in $G$ and $H$, respectively. If
$p$ is a prime with $p \nmid [G:\ti{G}]$ and $p \nmid [H:\ti{H}]$, then
Lemma 1.8 in \cite{Pic71} implies that $\Z_p G = \Z_p \ti{G} \cong 
\Z_p \ti{H} = \Z_p H$. \\
(d)
Suppose that $\varphi \colon \Z_p G \ra \Z_p H$ is an isomorphism. Then 
$\varphi(\Z_p I_i(G)) = \Z_p I_i(H)$, since $\Z_p \gamma_i(G) = 
\gamma_i( \Z_p G)$ is invariant under isomorphisms and further 
$\Z_p I_i(G) / \Z_p \gamma_i(G) \cong \Z_p ( I_i(G) / \gamma_i(G) )$
is the torsion subgroup of $\Z_p ( G / \gamma_i(G))$. Hence 
$\Z_p (I_i(G) / I_{i+1}(G)) \cong \Z_p (I_i(H) / I_{i+1}(H) )$
for each $i \in \N$, and $G$ and $H$ have the same type.
\end{proof}

\subsection{Isomorphisms}

The following result is the foundation to our approach towards testing 
for isomorphic finite quotients. Eick \& Engel \cite[Lemma 7]{EEn17} 
used a similar result to test isomorphisms of $\T$-groups.

\begin{theorem}\label{isom}
Let $t, s \in \Z(n)$ so that $G(t)$ and $G(s)$ are $\T$-groups of the
same type $(d_1,\ldots,d_c)$ with bases $(g_1, \ldots, g_n)$ and 
$(h_1, \ldots, h_n)$, respectively.
\begin{items}
\item[\rm (a)]
Let $p$ be a prime and $\varphi \colon \Z_p G(t) \ra \Z_p G(s)$ an 
isomorphism. Then $\varphi( \Z_p I_i(G(t)) = \Z_p I_i(G(s))$ for each 
$i \in \N$. 
\item[\rm (b)]
Let $p$ be a prime and $\varphi \colon \Z_p G(t) \ra \Z_p G(s)$ an 
isomorphism. Then for $1 \leq j \leq n$ it follows that $\varphi(g_j) 
= h^{m_j}$ for some $m_j = (m_{11}, \ldots, m_{1n}) \in \Z_p^n$ and the
matrix $M = (m_{ij})_{ 1 \le i,j \le n}$ is a block upper triangular
matrix with diagonal blocks $M_1, \ldots, M_c$ satisfying
$M_i \in \GL(d_i, \Z_p)$.
\item[\rm (c)] 
$F(G(t)) = F(G(s))$ if and only if for all primes $p$ there are
$m_1, \ldots, m_n \in \Z_p^n$ so that the elements $h^{m_1}, \ldots, 
h^{m_n} \in G(s)$ satisfy the relations of $G(t)$ and the matrix 
$M = (m_{ij})_{1 \le i,j \le n}$ is invertible over $\Z_p$.
\end{items}
\end{theorem}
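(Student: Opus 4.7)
The plan is to prove (a), (b), and (c) in sequence: part (a) yields filtration preservation by any isomorphism $\varphi$, part (b) refines this to the matrix structure on images of basis elements, and part (c) uses both to characterise $F$-equality by the existence of such images satisfying the defining relations of $G(t)$.

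Parts (a) and (b) are essentially extensions of the argument already appearing in the proof of Theorem \ref{main}(d). For (a), I would invoke the intrinsic characterizations $\Z_p \gamma_i(G(t)) = \gamma_i(\Z_p G(t))$ and that $\Z_p I_i(G(t))/\Z_p \gamma_i(G(t))$ equals the torsion subgroup of $\Z_p(G(t)/\gamma_i(G(t)))$; both descriptions are preserved by isomorphisms, so $\varphi$ sends $\Z_p I_i(G(t))$ to $\Z_p I_i(G(s))$. For (b), the basis $(g_1, \ldots, g_n)$ refines the isolator series of $G(t)$, so $g_j \in I_i(G(t))$ iff $j > d_1 + \cdots + d_{i-1}$, and analogously for $(h_1, \ldots, h_n)$; elements of $\Z_p I_i(G(s))$ are precisely those $h^m$ with $m_k = 0$ for $k \le d_1 + \cdots + d_{i-1}$. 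Applying (a) to $\varphi(g_j) = h^{m_j}$ forces the block upper triangular shape, and invertibility of each diagonal block $M_i \in \GL(d_i, \Z_p)$ follows because $\varphi$ induces an isomorphism between the free $\Z_p$-modules $\Z_p I_i/\Z_p I_{i+1}$ of rank $d_i$, represented in the induced bases precisely by $M_i$.

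For (c), the forward direction follows from Theorem \ref{main}(a) combined with (b): the images $h^{m_j}$ satisfy the defining relations of $G(t)$ because they are images of the $g_j$ under a group homomorphism. For the reverse direction, suppose $m_1, \ldots, m_n \in \Z_p^n$ have the stated properties. I would first define a homomorphism $\varphi \colon G(t) \to \Z_p G(s)$ by $g_j \mapsto h^{m_j}$, invoking the universal property of the polycyclic presentation of $G(t)$ since its relations are satisfied by the images. Next, I would extend $\varphi$ to $\ti\varphi \colon \Z_p G(t) \to \Z_p G(s)$ via $\ti\varphi(g^x) = (h^{m_1})^{x_1} \cdots (h^{m_n})^{x_n}$, the right-hand side being evaluated through the Hall polynomials of $G(s)$ and Lemma \ref{contained}. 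Both sides of the homomorphism identity $\ti\varphi(g^x g^y) = \ti\varphi(g^x) \ti\varphi(g^y)$ are rational polynomials in $x, y$ agreeing on $\Z^n \times \Z^n$ because $\varphi$ is a homomorphism there, and hence agreeing on $\Z_p^n \times \Z_p^n$ by the polynomial-identity principle of Lemma \ref{unique}. Once $\ti\varphi$ is shown to be an isomorphism, the desired conclusion $F(G(t)) = F(G(s))$ follows from Theorem \ref{main}(a).

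The main obstacle will be this final bijectivity step. My strategy is to first show that $\ti\varphi$ respects the isolator filtration by adapting the intrinsic isolator characterization from (a) to the homomorphism case: if $g \in I_i(G(t))$ then $g^k \in \gamma_i(G(t))$ for some $k \geq 1$, whence $\ti\varphi(g)^k \in \gamma_i(\Z_p G(s)) = \Z_p \gamma_i(G(s))$, placing $\ti\varphi(g)$ in the isolator $\Z_p I_i(G(s))$. Thus the matrix $M$ with rows $m_1, \ldots, m_n$ is automatically block upper triangular, and invertibility of $M$ over $\Z_p$ forces invertibility of each diagonal block $M_i$. Consequently $\ti\varphi$ induces $\Z_p$-module isomorphisms on each graded piece $\Z_p I_i / \Z_p I_{i+1}$, and a standard induction on the filtration length establishes bijectivity of $\ti\varphi$ itself.
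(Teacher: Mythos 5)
Your proposal is correct and follows essentially the same route as the paper: (a) via the isomorphism-invariant descriptions of $\gamma_i$ and the torsion subgroups, (b) by combining (a) with the induced maps on the graded quotients $\Z_p I_i/\Z_p I_{i+1} \cong \Z_p^{d_i}$, and (c) by extending the map $g_j \mapsto h^{m_j}$ to the completions via Hall polynomials and deducing bijectivity from the invertibility of the diagonal blocks. The only cosmetic difference is in the last step of (c): the paper routes the extension through the subgroup $H = \langle h^{m_1}, \ldots, h^{m_n}\rangle$ and cites Lemma \ref{lem:GroupRG}(b), whereas you inline that lemma's polynomial-identity argument and prove bijectivity directly on the filtration, which amounts to the same thing.
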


\begin{proof}
(a) This is part of the proof of Theorem \ref{main} (d). \\
(b) Let $\varphi \colon \Z_p G(t) \ra \Z_p G(s)$ be an isomorphism. If $g_j 
\in I_i(G(t))$, then $\varphi(g_j) \in \Z_p I_i(G(s))$ by (a). Hence $M$ is 
in block upper triangular form and the blocks on the diagonal correspond 
to the isomorphisms $\varphi_i \colon \Z_p I_i(G(t)) / \Z_p I_{i+1}(G(t)) 
\ra \Z_p I_i(G(s)) / \Z_p I_{i+1}(G(s))$ induced by $\varphi$. Since
$\Z_p I_i(G(t)) / \Z_p I_{i+1}(G(t)) \cong \Z_p^{d_i} \cong 
\Z_p I_i(G(s)) / \Z_p I_{i+1}(G(s))$, each isomorphism $\varphi_i$ 
corresponds to an invertible $d_i \times d_i$-matrix over $\Z_p$. \\
(c) 
By (b) and Theorem \ref{main} (a) it suffices to prove the following for 
an arbitrary prime $p$: If there are $m_1, \ldots, m_n \in \Z_p^n$ so that 
the elements $h^{m_1}, \ldots, h^{m_n}$ satisfy the relations of $G(t)$ 
and the matrix of exponents $M = (m_{ij})_{1 \le i,j \le n}$ is invertible 
over $\Z_p$, then $\Z_p G(t) \cong \Z_p G(s)$ follows. 

Fix a prime $p$ and assume that $m_1, \ldots, m_n \in \Z_p^n$ exist. 
Let $\varphi \colon \{ g_1, \ldots, g_n \} \to \Z_p G(s), 
\ g_i \mapsto h^{m_i}$. As the images $\varphi(g_i)$ satisfy the relations 
of $G(t)$, it follows that $\varphi$ extends to an epimorphism 
$\tilde{\varphi} \colon G(t) \ra H \leq \Z_p G(s)$, where $H$ is the subgroup
generated by $h_1^{m_1}, \ldots, h_n^{m_n}$. As $M$ is invertible, it 
induces isomorphisms $I_i(G(t)) / I_{i+1}(G(t))$ onto $I_i(H)/I_{i+1}(H)$.
This implies that $\tilde{\varphi}$ is injective. Lemma \ref{lem:GroupRG}(b) 
now yields that $\tilde{\varphi}$ extends to an isomorphism $\Z_p G(t) \to 
\Z_p H$. Finally, $M$ is invertible, hence $\Z_p H = \Z_p G(s)$.
\end{proof}

Our general approach to classify $\T$-groups up to isomorphic finite quotients
is the following: Fix a type and consider indeterminates for $s$ and $t$.
Theorem \ref{isom}(c) translates the condition $F(G(s)) = F(G(t))$ into
a system of equations in $s,t$ and the entries of a block upper triangular 
matrix $M$.
It now remains to determine for which integral values of $s$ and $t$ this
system of equations is solvable over $\Z_p$ for all primes $p$.

\begin{lemma}
\label{twostep}
Let $G$ be a $\T$-group and let $C_i(G) = C_G(I_i(G)/I_{i+2}(G))$ be a 
two-step centralizer. Then $C_i(G)$ is characteristic in $G$ and 
$\Z_p C_i(G)$ is characteristic in $\Z_p G$.
\end{lemma}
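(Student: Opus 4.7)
The plan splits into two parts, both resting on the characteristicity of the isolator series.

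For the first claim I would note that each $\gamma_j(G)$ is characteristic in $G$, so every $\varphi \in \Aut(G)$ descends to an automorphism of $G/\gamma_j(G)$ that preserves its torsion subgroup $I_j(G)/\gamma_j(G)$; hence $I_j(G)$ is characteristic for every $j$. Since $C_i(G)$ is the centralizer of the characteristic section $I_i(G)/I_{i+2}(G)$, it is characteristic in $G$.

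For the second claim the strategy is to identify
\[ \Z_p C_i(G) \;=\; C_{\Z_p G}\!\bigl(\Z_p I_i(G)/\Z_p I_{i+2}(G)\bigr). \]
Once this identification is in place, the proof of Theorem \ref{main}(d) shows that each $\Z_p I_j(G)$ is characteristic in $\Z_p G$, so the right-hand side is the centralizer of a characteristic section and thus characteristic. To prove the identification I would use that the isolator series is central and satisfies $[I_j(G), I_k(G)] \subseteq I_{j+k}(G)$ (verifiable by passing to the Malcev Lie algebra). Then commutation induces a $\Z$-bilinear pairing
\[ \ol\beta:\; G/I_2(G) \times I_i(G)/I_{i+1}(G) \;\to\; I_{i+1}(G)/I_{i+2}(G), \quad (g,x) \mapsto [g,x]\, I_{i+2}(G), \]
whose left kernel equals $C_i(G)/I_2(G)$. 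In a basis adapted to the isolator series, $\ol\beta$ is described by a rational bilinear polynomial extracted from the commutator Hall polynomial of $G$; Lemmas \ref{unique} and \ref{contained} then imply that this polynomial extends to $\Z_p$-valued inputs, where it coincides with the analogous commutator pairing $\ol\beta_{\Z_p}$ inside $\Z_p G$.

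The main obstacle is the concluding base-change step: to infer that the left kernel of $\ol\beta_{\Z_p}$ equals $\Z_p$ tensored with the left kernel of $\ol\beta$. This will hold because both sections are free $\Z$-modules of finite rank and $\Z_p$ is flat over $\Z$, so the linear equations cutting out the left kernel have the same solution structure over $\Z$ and over $\Z_p$. The resulting left kernel is $\Z_p C_i(G)/\Z_p I_2(G)$, which yields the desired identification and hence the lemma.
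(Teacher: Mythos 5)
Your proof is correct and follows the same route as the paper: the paper's entire argument is to note the identification $\Z_p C_i(G) = C_{\Z_p G}(\Z_p I_i(G)/\Z_p I_{i+2}(G))$ and then invoke that centralizers of characteristic sections are characteristic, the characteristicity of the $\Z_p I_j(G)$ being Theorem \ref{isom}(a). You additionally justify, via the bilinear commutator pairing and flatness of $\Z_p$ over $\Z$, the identification that the paper simply asserts with ``we note''.
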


\begin{proof}
We note that $\Z_p C_i(G) = C_{\Z_p G}(\Z_p I_i(G)/ \Z_p I_{i+2}(G))$. Now
the result follows, since centralizers of characteristic subgroups are
characteristic, see also Theorem \ref{isom}(a).
\end{proof}

\section{Proofs}\label{proofs}

This section contains the proofs for our results in Section \ref{results}.
We use the parameterized Hall polynomials for $\T$-groups of Hirsch length 
at most $5$, see \cite{CEi19}, the solution to the isomorphism problem for
these groups as described in \cite{EEn17} and the computer algebra system 
GAP \cite{GAP} as starting points. Our GAP code is publically available at
\cite{CEi23}.
Further, for a prime $p$ let $\nu_p \colon \Z_p \to \N_0 \cup \{ \infty \}$ 
denote the $p$-adic valuation.

\subsection{Proof of Theorem \ref{unbounded} and Corollary \ref{coro}}
\label{nobound}

Let $G$ be a $\T$-group of type $(2,1,1)$ so that the genus $\g(G)$
contains $m$ pairwise non-isomorphic groups $H_1, \ldots, H_m$ of type
$(2,1,1)$, see Section \ref{sec:Type211}, and let $A$ be free abelian of rank $\ell$. 
We show that $H_1 \times A, \ldots, H_m \times A$ are pairwise non-isomorphic 
groups in $\g(G \times A)$.

Consider $H \in \g(G)$. Then $\Z_p(H \times A) = \Z_pH \times \Z_pA$ by
construction for all primes $p$. This implies that $F(H \times A) = 
F(G \times A)$ by Theorem \ref{main}. Hence $H \times A \in 
\g(G \times A)$. 

The result that $H_i \times A \not \cong H_j \times A$ for $i \neq j$ 
follows from Hirshon \cite{Hir77}, since $H_i$ and $H_j$ both are of
type $(2,1,1)$ and thus have cyclic centers.
\hfill \fillbox

\subsection{$\T$-groups of type $(2,1,1)$}

\textit{Proof of Theorem \ref{class211}(b):}
Let $s, t \in T(2,1,1)$ and write $(g_1, \ldots, g_4)$ for a basis of
$G(t)$ and $(h_1, \ldots, h_4)$ for a basis of $G(s)$. Then $C_2(G(t))$ satisfies 
$G(t) > C_2(G(t)) > I_2(G(t))$ with $G(t)/C_2(G(t))$ infinite cyclic and 
$C_2(G(t))/I_2(G(t))$ infinite cyclic, see \cite[Lemma 8]{EEn17}. Using Theorem \ref{isom}
and Lemma \ref{twostep}, we assume that an isomorphism $\Z_p G(t) \ra \Z_p G(s)$ has
the form
\begin{alignat*}{3}
    g_1 & \ms & \ && h_1^{m_{11}} h_2^{m_{12}} h_3^{m_{13}} h_4^{m_{14}} \\
    g_2 & \ms & \ && h_2^{m_{22}} h_3^{m_{23}} h_4^{m_{24}} \\
    g_3 & \ms & \ && h_3^{m_{33}} h_4^{m_{34}} \\
    g_4 & \ms & \ && h_4^{m_{44}} 
\end{alignat*}
where $m_{ii}$ are invertible over $\Z_p$ and $m_{ij} \in \Z_p$. 
Evaluating the relations of $G(t)$ in $G(s)$ translates to the following
equations. (This translation can best be done by computer, see \cite{CEi23} 
for GAP code for this purpose.)
\begin{alignat*}{2}
    \text{(1)} & \quad & t_{123} m_{11}^{-1} m_{22}^{-1} m_{33} - s_{123} = & \ 0 \\
    \text{(2)} & \quad & t_{134} m_{11}^{-1} m_{33}^{-1} m_{44} - s_{134} = & \ 0 \\
    \text{(3)} & \quad & t_{124} m_{11}^{-1} m_{22}^{-1} m_{44} - s_{124} = & \
        s_{123} ( s_{134} (m_{11} - 1)/2 - m_{33}^{-1} m_{34} ) + s_{134} m_{22}^{-1} m_{23}           
\end{alignat*}
Equations (1--2) have solutions over $\Z_p$ for all primes $p$ if and 
only if $s_{123} = t_{123}$ and $s_{134} = t_{134}$, since $m_{ii} \in 
\Z_p^*$. In this case $m_{33} = m_{11} m_{22}$ and $m_{44} = m_{11}^2 m_{22}$ 
follow and it remains to consider Equation (3). This has a solution over
$\Z_p$ if and only if there are $x,y \in \Z_p$ and $u \in \Z_p^*$ with
\[ t_{124} u - s_{124} = s_{123} x + s_{134} y. \]
We note that if $x$ and $y$ range over $\Z_p$, then $s_{123} x + s_{134} y$ ranges
precisely over the $\Z_p$-multiples of $d(s) = \gcd(s_{123}, s_{134})$.
Let $k = \nu_p(d(s))$.
Then the $\Z_p$-multiples of $d(s)$ coincide with the $\Z_p$-multiples of $p^k$,
since $d(s)/p^k$ is a unit in $\Z_p$.

Hence a solution of Equation (3) implies the existence of a $w_p \in \Z$ with
$p \nmid w_p$ so that
\[ t_{124} w_p - s_{124} \equiv 0 \bmod p^k. \]
Vice versa, such an integer $w_p$ is a unit in $\Z_p$ and its existence shows that Equation (3) can be solved.
\hfill \fillbox

\subsection{$\T$-groups of type $(3,1,1)$}

\textit{Proof of Theorem \ref{class311}(b):}
Let $s, t \in T(3,1,1)$ and write $(g_1, \ldots, g_5)$ for the basis of
$G(t)$ and $(h_1, \ldots, h_5)$ for the basis of $G(s)$.
The presentation of $G(t)$ implies $C_1(G(t)) = \langle g_3, \ldots, g_5 \rangle$ and
$C_2(G(t)) = \langle g_2, \ldots, g_5 \rangle$.
Theorem \ref{isom} and Lemma \ref{twostep} show that an isomorphism $\Z_p G(t) \ra \Z_p G(s)$ has
the form
\begin{alignat*}{3}
    g_1 & \ms & \ && h_1^{m_{11}} h_2^{m_{12}} h_3^{m_{13}} h_4^{m_{14}} h_5^{m_{15}} \\
    g_2 & \ms & \ && h_2^{m_{22}} h_3^{m_{23}} h_4^{m_{24}} h_5^{m_{25}} \\
    g_3 & \ms & \ && h_3^{m_{33}} h_4^{m_{34}} h_5^{m_{35}} \\
    g_4 & \ms & \ && h_4^{m_{44}} h_5^{m_{45}} \\
    g_5 & \ms & \ && h_5^{m_{55}}
\end{alignat*}
where $m_{ii}$ are invertible over $\Z_p$ and $m_{ij} \in \Z_p$. 
Evaluating the relations of $G(t)$ in $G(s)$ translates to the following
equations.
\begin{alignat*}{2}
    \text{(1)} & \quad & t_{124} m_{11}^{-1} m_{22}^{-1} m_{44} - s_{124} = & \ 0 \\
    \text{(2)} & \quad & t_{145} m_{11}^{-1} m_{44}^{-1} m_{55} - s_{145} = & \ 0 \\
    \text{(3)} & \quad & t_{235} m_{22}^{-1} m_{33}^{-1} m_{55} - s_{235} = & \ 0 \\
    \text{(4)} & \quad & t_{135} m_{11}^{-1} m_{33}^{-1} m_{55} - s_{135} = & \ s_{145} m_{33}^{-1} m_{34} + s_{235} m_{11}^{-1} m_{12} \\
    \text{(5)} & \quad & t_{125} m_{11}^{-1} m_{22}^{-1} m_{55} - s_{125} = & \ s_{124} ( s_{145} ( m_{11} - 1 ) / 2 - m_{44}^{-1} m_{45} ) + s_{135} m_{22}^{-1} m_{23} \\
    & \quad & & + s_{145} m_{22}^{-1} m_{24} + s_{235} ( m_{11}^{-1} m_{22}^{-1} m_{12} m_{23} - m_{11}^{-1} m_{13} )
\end{alignat*}
Equations (1--3) have solutions over $\Z_p$ for all primes $p$ if and 
only if $s_{ijk} = t_{ijk}$ for $(ijk) \in \{ (124), (145), (235) \}$.
In this case $m_{44} = m_{11} m_{22}$ and $m_{55} = m_{11}^2 m_{22}$ 
follow.
If $s_{235} > 0$, then $m_{33} = m_{11}^2$ follows additionally.

It remains to consider Equations (4--5).
Note that $m_{11}^{-1} m_{33}^{-1} m_{55} = m_{11} m_{22} m_{33}^{-1}$
and $m_{11}^{-1} m_{22}^{-1} m_{55} = m_{11}$ can take arbitrary values in $\Z_p^*$ independently.
Further note that the right hand sides of Equations (4--5) independently range over all
$\Z_p$-multiples of $d_1(s) = \gcd(s_{145}, s_{235})$ and $d_2(s) = \gcd(s_{124}, s_{135}, d_1(s))$,
respectively.

Fix a prime $p$ and let $k_1 = \nu_p(d_1(s))$ and $k_2 = \nu_p(d_2(s))$.
Similar to our proof of Theorem \ref{class211} (b), we deduce that Equations (4--5) have a solution if and only if
there are $w_p, v_p \in \Z \setminus p \Z$ so that $p^{k_1} \mid (t_{135} w_p - s_{135})$ and $p^{k_2} \mid (t_{125} v_p - s_{125})$ hold. \hfill \fillbox

\subsection{$\T$-groups of type $(2,1,1,1)$}\label{subsec:Proof2111}

\textit{Proof of Theorem \ref{class2111}(b):}
Let $s, t \in T(2,1,1,1)$ and write $(g_1, \ldots, g_5)$ for the basis of
$G(t)$ and $(h_1, \ldots, h_5)$ for the basis of $G(s)$. The presentation 
of $G(t)$ implies $C_3(G(t)) = \langle g_2, \ldots, g_5 \rangle$. Thus an 
isomorphism $\Z_p G(t) \ra \Z_p G(s)$ has the same general form as for type 
$(3,1,1)$, see also Theorem \ref{isom} and Lemma \ref{twostep}. Evaluating 
the relations of $G(t)$ in the images of an isomorphism translates to 7
equations. We define
\begin{align*}
 v & = s_{123} ( m_{11} - 1) + m_{22}^{-1} m_{23} \\
 w & = s_{134} ( m_{11} - 1 ) / 2 + m_{33}^{-1} m_{34} \\
 x & = - m_{44}^{-1} m_{45} \\
 y & = m_{11}^{-1} m_{12} \\
 z_1 & = s_{134} s_{145} (m_{11}-1) (m_{11}-2) / 6 
         + s_{134} m_{44}^{-1} m_{45} ( m_{11} - 1) / 2 
         - s_{135} ( m_{11} - 1 ) / 2 \\
   & \ \ \ + s_{235} m_{12} + s_{235} ( m_{22} - 1 ) / 2  
         + m_{33}^{-1} ( m_{44}^{-1} m_{34} m_{45} - m_{35} ) \\
 z_2 & = s_{124} ( m_{11} - 1 ) / 2 
   + s_{134} m_{22}^{-1} m_{23} ( m_{11} - 1 ) / 2 + m_{22}^{-1} m_{24} \\
 z_3 & = m_{11}^{-1} ( m_{22}^{-1} m_{12} m_{23} - m_{13} )
\end{align*}
and, based on this, obtain the equations
\begin{alignat*}{2}
 \text{(1)} 
  & \quad & t_{123} m_{11}^{-1} m_{22}^{-1} m_{33} - s_{123} = & \ 0 \\
 \text{(2)} 
  & \quad & t_{134} m_{11}^{-1} m_{33}^{-1} m_{44} - s_{134} = & \ 0 \\
 \text{(3)} 
  & \quad & t_{145} m_{11}^{-1} m_{44}^{-1} m_{55} - s_{145} = & \ 0 \\
 \text{(4)} 
  & \quad & t_{235} m_{22}^{-1} m_{33}^{-1} m_{55} - s_{235} = & \ 0 \\
 \text{(5)} 
  & \quad & t_{124} m_{11}^{-1} m_{22}^{-1} m_{44} - s_{124} = & \ 
    s_{134} v - s_{123} w \\
 \text{(6)}  
  & \quad & t_{135} m_{11}^{-1} m_{33}^{-1} m_{55} - s_{135} = & \ 
    s_{134} x + s_{145} w + s_{235} y \\
 \text{(7)} 
  & \quad & t_{125} m_{11}^{-1} m_{22}^{-1} m_{55} - s_{125} = & \
    s_{124} x + s_{134} v x + s_{135} v + s_{123} z_1 + s_{145} z_2 
    + s_{235} z_3
\end{alignat*}
Equations (1--4) have solutions over $\Z_p$ for all primes $p$ if 
and only if $s_{ijk} = t_{ijk}$ holds for $(ijk) \in \{ (123), (134), 
(145), (235) \}$. In this case $m_{33} = m_{11} m_{22}$, $m_{44} = 
m_{11}^2 m_{22}$ and $m_{55} = m_{11}^3 m_{22}$ follow. If $s_{235} > 0$, 
then $m_{22} = m_{11}^2$ follows additionally.

It remains to consider Equations (5--7). Write $u = m_{11}$ to shorten
notation and note that $u = m_{11}^{-1} m_{22}^{-1} m_{44} 
= m_{11}^{-1} m_{33}^{-1} m_{55}$ and $u^2 = m_{11}^{-1} m_{22}^{-1} m_{55}$.
Next, note that if all $m_{ij}$ range independently over $\Z_p$, then so do 
$v$, $w$, $x$, $y$, $z_1$, $z_2$ and $z_3$. Further, $s_{123} z_1 + 
s_{145} z_2 + s_{235} z_3$ ranges over all $\Z_p$-multiples of $d_3(s) 
= \gcd(s_{123},s_{145},s_{235})$.

Fix a prime $p$. Then Equations (5--7) have a solution over $\Z_p$
if and only if there are $u \in \Z_p^*$ and $v, w, x, y, z \in \Z_p$ with
\begin{alignat*}{2}
 \rm{(i)} & \quad & s_{134} v = & \ t_{124} u - s_{124} + s_{123} w, \\
 \rm{(ii)} & \quad & s_{134} x = & \ t_{135} u - s_{135} - s_{145} w - s_{235} y,
         \quad \text{and} \\
 \rm{(iii)} & \quad & d_3(s) z = & \ t_{125} u^2 - s_{125} - s_{124} x 
       - s_{134} v x - s_{135} v.
\end{alignat*}

If Equations (i--iii) have a solution, then they can be solved modulo 
any power of $p$ by $u, v, w, x, y, z \in \Z$ with $p \nmid u$.
Vice versa, assume that $u, v, w, x, y, z \in \Z$ with $p \nmid u$ solve
Equations (i--iii) modulo $p^k = p^{\alpha + \beta}$,
where $\alpha = \nu_p(d_3(s))$ and $\beta = \nu_p(s_{134})$.
Then $u \in \Z_p^*$ and there are $A, B, C, D \in \Z$ with
\begin{align*}
   s_{134} v + A p^{\alpha + \beta} & = t_{124} u - s_{124} + s_{123} w, \\
   s_{134} x + B p^{\alpha + \beta} & = t_{135} u - s_{135} - s_{145} w - s_{235} y \quad \text{and} \\
   D p^\alpha = d_3(s) z + C p^{\alpha + \beta} & = t_{125} u^2 - s_{125} - s_{124} x - s_{134} v x - s_{135} v.
\end{align*}
It follows that $\tilde{u} = u$, $\tilde{w} = w$, $\tilde{y} = y$,
$\tilde{v} = v + A p^\alpha (s_{134}/p^\beta)^{-1}$ and
$\tilde{x} = x + B p^\alpha (s_{134}/p^\beta)^{-1}$ solve Equations 
(i--ii). If we insert these values in Equation (iii), then 
the right hand side translates to
\[ \Big( D - \big( s_{124} B + s_{134} (vB + xA + AB p^\alpha (s_{134}/p^\beta)^{-1}) + s_{135} A \big) (s_{134}/p^\beta)^{-1} \Big) p^\alpha =: E p^\alpha, \]
so $\tilde{z} = E (d_3(s)/p^\alpha)^{-1}$ yields a solution to Equation (iii).
\hfill \fillbox

\subsection{$\T$-groups of type $(2,1,2)$}\label{subsec:Proof212}

\textit{Proof of Theorem \ref{class212}(b):}
Let $s, t \in T(2,1,2)$ and $k = s_{235} / s_{134} \in \Z$.
Write $(g_1, \ldots, g_5)$ for the basis of $G(t)$ and $(h_1, \ldots, h_5)$ for the basis of $G(s)$.
Theorem \ref{isom} and Lemma \ref{twostep} show that an isomorphism $\Z_p G(t) \ra \Z_p G(s)$ has
the form
\begin{alignat*}{3}
    g_1 & \ms & \ && h_1^{m_{11}} h_2^{m_{12}} h_3^{m_{13}} h_4^{m_{14}} h_5^{m_{15}} \\
    g_2 & \ms & \ && h_1^{m_{21}} h_2^{m_{22}} h_3^{m_{23}} h_4^{m_{24}} h_5^{m_{25}} \\
    g_3 & \ms & \ && h_3^{m_{33}} h_4^{m_{34}} h_5^{m_{35}} \\
    g_4 & \ms & \ && h_4^{m_{44}} h_5^{m_{45}} \\
    g_5 & \ms & \ && h_4^{m_{54}} h_5^{m_{55}}
\end{alignat*}
where $m_{ij} \in \Z_p$ with $m_{33} \in \Z_p^*$ and
\[ M_1 = \begin{pmatrix} m_{11} & m_{12} \\ m_{21} & m_{22} \end{pmatrix}, \
    M_3 = \begin{pmatrix} m_{44} & m_{45} \\ m_{54} & m_{55} \end{pmatrix}
    \in \GL(2,\Z_p). \]
Evaluating the relations of $G(t)$ in $G(s)$ translates to the following (matrix)
equations.
\begin{alignat*}{2}
    \text{(1)} & \quad & t_{123} m_{33} = & \ s_{123} \det(M_1) \\
    \text{(2)} & \quad & \operatorname{diag}( t_{134}, t_{235} ) M_3 = & \ m_{33} M_1 \operatorname{diag}( s_{134}, s_{235} ) \\
    \text{(3)} & \quad & (t_{124}, t_{125}) M_3 - (s_{124}, s_{125}) \det(M_1) = & \ (m_{23}, -m_{13}) M_1 \operatorname{diag}(s_{134},s_{235}) + s_{123} (x,y)
\end{alignat*}
In Equation (3) we used the following definitions.
\begin{align*}
    x & = s_{134} ( m_{11}^2 m_{22} - m_{12} m_{21}^2 - \det( M_1 ) ) / 2 - \det(M_1) m_{33}^{-1} m_{34} \\
    y & = s_{235} ( m_{11} m_{22}^2 - m_{12}^2 m_{21} - \det( M_1 ) + 2 m_{12} m_{22} ( m_{11} - m_{21} ) ) / 2 - \det( M_1 ) m_{33}^{-1} m_{35}
\end{align*}

Equation (1) implies that $s_{123} = t_{123}$ and $m_{33} = \det(M_1)$.
If Equation (2) has a solution over $\Z_p$ for a prime $p$, then the 
diagonal matrices
$\operatorname{diag}( s_{134}, s_{235} )$ and 
$\operatorname{diag}( t_{134}, t_{235} )$
have the same elementary divisors over $\Z_p$. Thus
$\nu_p(s_{134}) = \nu_p(t_{134})$ and $\nu_p(s_{235}) = \nu_p(t_{235})$;
see also \cite[Sec.\@ IV, §7]{LomQui}.
As a consequence, Equation (2) has a solution for all primes $p$ if and 
only if $s_{134} = t_{134}$ and $s_{235} = t_{235}$. In this case, 
Equation (2) implies $m_{45} = k m_{12} m_{33}$ and thus $M_3 \in D_k(\Z_p)$.
We now fix a prime $p$ for the remainder of the proof.

If all $m_{ij}$ range independently over $\Z_p$, then so do $x$ and $y$.
Further $(m_{23}, -m_{13}) M_1$ ranges over all of $\Z_p^2$, since we 
assume that $M_1$ is invertible. It follows that the right hand side of 
Equation (3) takes arbitrary values of the form
$(a \gcd(s_{123},s_{134}), b \gcd(s_{123}, s_{235}))$ with $a,b \in \Z_p$,
and thus $(s_{124}, s_{125})$ and $(t_{124}, t_{125})$ are in the same 
orbit under the action of $D_k(\Z_p)$ on $L(s_{134}, s_{235}, s_{123})$.

Vice versa, assume that
$(t_{124}, t_{125}) A \equiv (s_{124}, s_{125}) 
\bmod (\gcd(s_{123},s_{134}), \gcd(s_{123}, s_{235}))$
for some $A \in D_k(\Z_p)$. It suffices to determine $M_1, M_3 \in 
\GL(2,\Z_p)$ with $\det(M_1)^{-1} M_3 = A$ that solve Equation (2).
Note that if $M_1$ and $M_3$ solve Equation (2), then $\det(M_3) = 
\det(M_1)^3$ holds, and thus $\det(M_1) = \det(A)$ and $M_3 = \det(A) A$.
This yields a solution to Equations (1--3) over $\Z_p$.
\hfill \fillbox

\begin{remark}\label{OrbitsOnL}
    Fix integers $a,b,c > 0$ with $a \mid b$ and a prime $p$, and let $k = b/a$.
    Let us give an explicit description of the orbits of $D_k(\Z_p)$ on $L(a,b,c)$.
    Let $(d,e), (f,g) \in L(a,b,c)$ and write $\ell = \nu_p( \gcd(a,c) )$ and $m = \nu_p( \gcd(b,c) )$.
    
    If there is a matrix $A \in D_k(\Z_p)$ with $(d,e) A \equiv (f,g) \mod (\gcd(a,c),\gcd(b,c))$,
    then there are $\alpha, \beta, \gamma, \delta \in \Z$ with
    \[ p^\ell \mid ( d \alpha + e \gamma - f ), \quad p^m \mid ( d k \beta + e \delta - g )
        \quad \text{and} \quad p \nmid (\alpha \delta - k \beta \gamma). \]
    
    The other way round, such integers $\alpha, \beta, \gamma, \delta$
    immediately yield a matrix
    \[ A = \begin{pmatrix} \alpha & k \beta \\ \gamma & \delta \end{pmatrix} \in D_k(\Z_p) \]
    with $(d,e) A \equiv (f,g) \mod (\gcd(a,c),\gcd(b,c))$.
\end{remark}

\bibliographystyle{abbrv}

\end{document}